\documentclass{conm-p-l}

\usepackage{amssymb}

\usepackage{graphicx}

\usepackage{}

\newcommand{\Z}{{\mathbb Z}}
\newcommand{\KK}{{\mathbb K}}
\newcommand{\PP}{{\mathbb P}}

\newcommand{\A}{{\mathcal A}}
\newcommand{\C}{{\mathbb C}}
\newcommand{\Sym}{\mathop{\rm Sym}\nolimits}
\newcommand{\ot}{\mathop{\rm OT}\nolimits}
\newcommand{\ao}{\mathop{\rm AOT}\nolimits}

\newcommand{\codim}{\mathop{\rm codim}\nolimits}

\newcommand{\IN}{\mathrm{in}}

\newcommand{\HS}{\mathrm{HS}}

\newtheorem{theorem}{Theorem}[section]
\newtheorem{proposition}[theorem]{Proposition}
\newtheorem{lemma}[theorem]{Lemma}

\theoremstyle{definition}
\newtheorem{definition}[theorem]{Definition}
\newtheorem{example}[theorem]{Example}

\theoremstyle{remark}

\numberwithin{equation}{section}

\begin{document}

\title[Subarrangements of type A]{Subarrangements of type A: the weak Lefschetz \\property of the Artinian Orlik-Terao algebra}


\author{Nicholas Gaubatz}
\address{Gaubatz: Department of Mathematics \& Statistics, Auburn University, Auburn, AL 36849}
\email{nmg0029@auburn.edu}

\author{Hal Schenck}
\address{Schenck: Department of Mathematics \& Statistics, Auburn University, Auburn, AL 36849}
\email{hks0015@auburn.edu}

\subjclass[2000]{52C35, 13D02, 13D40} \keywords{Lefschetz Property, Orlik-Terao algebra, Hyperplane Arrangement}

\date{}

\begin{abstract}
    In \cite{OT1}, Orlik and Terao introduce a commutative Artinian analog $Sym(V^*)/I_{\A}$ of the Orlik-Solomon algebra of a hyperplane arrangement $\A$ to answer a question of Aomoto \cite{AO}. A central topic of investigation in the study of Artinian algebras is the Weak Lefschetz Property (WLP). We analyze WLP for the Artinian Orlik-Terao algebra of graphic arrangements. Even for chordal graphs (which give rise to Koszul algebras) WLP sometimes fails; conversely an analysis of the state polytope shows WLP can hold even when WLP fails for all possible initial ideals. More generally, for any algebra with a tensor product decomposition, we construct canonical elements in the kernel of the multiplication map, refining previous results in the literature.
\end{abstract}

\maketitle


\section{Introduction} \label{sec:one}

Let $\A=\{H_1,\dots ,H_d\}$ be an arrangement of complex hyperplanes in $\C^n$, with $H_i=V(\ell_i)$ and $\ell_i \in \C[x_1,\ldots,x_n]$. In their 1980 paper \cite{OS}, Orlik and Solomon prove that the cohomology ring of the 
complement $X=\C^n\setminus \bigcup_{i=1}^d H_i$ is determined by the intersection lattice $L(\A)$ (the poset of intersections, graded by codimension), showing that $H^*(X,\Z)$ is the quotient of the exterior algebra $E=\bigwedge (\Z^d)$ on generators
$e_1, \dots , e_d$ in degree one by the ideal generated by all those elements of the form $\partial e_{i_1\dots i_r}=\sum_{q}(-1)^{q-1}e_{i_1} \cdots
\widehat{e_{i_q}}\cdots e_{i_r}$ for which $\codim(H_{i_1}\cap \cdots \cap H_{i_r}) < r$.

\subsection{The Orlik-Terao algebra}

Orlik and Terao \cite{OT1} define a commutative version of $H^*(X,\Z)$ over a field $\mathbb{K}$ to answer a question of Aomoto \cite{AO}. 

\begin{definition}\label{OTdef}
    Let $[d]$ denote $\{1,\ldots ,d\}$, $H_i = V(\ell_i)$ and let $\Lambda =\{i_1,\ldots ,i_k\} \subset [d]$. If $\codim(\bigcap_{j=1}^k H_{i_j}) < k$, then there are constants $c_{t}$ such that
    \[
        \sum\limits_{t \in \Lambda} c_t \ell_t =0.
    \]
    For each dependency $\Lambda=\{i_1,\ldots ,i_k\}$, let $r_\Lambda = \sum_{j=1}^k c_{i_j}y_{i_j} \in S
    =\mathbb{K}[y_1,\ldots,y_d]$. Define 
    \[
        f_\Lambda = \partial(r_\Lambda) = \sum_{j=1}^k c_{i_j} \cdot y_{i_1}\cdots \hat
        y_{i_{j}} \cdots y_{i_k},
    \]
    and let $I$ be the ideal generated by the $f_{\Lambda}$. The {\em Orlik-Terao algebra} $\ot$ is the quotient of $\mathbb{K}[y_1,\ldots,y_d]$ by $I$, and is also known as the {\em algebra of reciprocal forms}, because the ideal $I$ also arises as the kernel of the map $Sym(\mathbb{K}^d) \rightarrow S$ given by $y_i \mapsto \frac{1}{\ell_i}.$ The {\em Artinian Orlik-Terao algebra} $\ao$ is the quotient of $\ot$ by the squares of the variables. In this note, we focus on the case of $char(\mathbb{K})=0$.
\end{definition}
Orlik-Terao show that the Hilbert series of AOT agrees with that of $H^*(X,\mathbb{K})$ so we have $\HS(\text{AOT}, t) = P(\mathcal{A}, t)$, the Poincar\'e polynomial (see \cite{OT1}, Theorem 4.3). Proudfoot-Speyer prove in \cite{PS} that it is Cohen-Macaulay and has a flat degeneration to the Stanley-Reisner ring of the broken circuit complex. A number of recent works investigate the geometry and matroid theoretic properties of OT and AOT, see for example \cite{DGT, FSW, GST, LM, PS, S, ST,T}. In this paper, we focus on the Artinian Orlik-Terao algebra for subarrangements of the reflecting hyperplanes of a type A reflection arrangement, which are also known as {\em graphic arrangements}.

\subsection{Graphic arrangements}

\begin{definition}
Let $G$ be a simple, connected graph on $\kappa_0$ vertices, with edge-set $\mathsf{E}$ with $|\mathsf{E}|=\kappa_1$. Then 
\[\mathcal{A}_G=\bigcup \{ V(x_i-x_j)\mid (i,j)\in \mathsf{E} \}
\]
is the corresponding arrangement in $\C^{\kappa_0}$.
\end{definition}

In \cite{K85}, Kohno connected an algebraic property of the cohomology ring (Koszulness) to a property of the intersection lattice $L(\A)$ of the arrangement:

\begin{definition}
    A pair $(x,y)$ of elements of a lattice $L$ is modular if for all $z \le y$, $z \vee (x \wedge y) = (z \vee x) \wedge y.$ An element $x$ is {\em modular} if it forms a modular pair for all $y \in L$, and $L$ is {\em supersolvable} if it has a maximal chain of modular elements.
\end{definition}

In \cite{STAN} Stanley shows that $L(\A_G)$ is supersolvable iff $G$ is chordal. For the Orlik-Solomon algebra, \cite{Y} proves that supersolvability is synonymous with having (in some term order) a quadratic Gr\"obner basis, and \cite{DGT} shows the same for the Orlik-Terao algebra. An algebra admitting (in some term order) a quadratic Gr\"obner basis is Koszul; for cohomology rings of graphic arrangements \cite{SS} shows they are equivalent; Question 4.7 of \cite{DGT} asks about this in the AOT setting. \begin{definition} A graph is {\em chordal} if every cycle with at least four vertices has a {\em chord}: an edge connecting two non-adjacent vertices in the cycle. \end{definition}
Let $\kappa_i$ denote the clique numbers of $G$: $\kappa_0 = \sharp \mbox{ vertices of G}, \kappa_1 = \sharp \mbox{ edges of G}$, and so on. As noted in Remark 6.18 of \cite{SS}, letting $\Delta_j = \sum_{s=j}^{\kappa_0-1}(-1)^{s-j}{s \choose j} \kappa_{s}$, the chromatic polynomial of a chordal graph $G_{ch}$ is given by
\[
    \chi_{G_{ch}}(t)=t^{\kappa_0} \prod_{j=1}^{\kappa_0-1}
    \left(1-jt^{-1}\right)^{\Delta_j},
\]

Using that $\chi_{G}(t) = t^{\kappa_0} P(\A_G, -t^{-1})$ where $P(\A.t) = \sum \dim H^i(X,\mathbb{K})t^i$ is the Hilbert series of the cohomology ring yields the Hilbert series of the AOT algebra of a chordal graph $G_{ch}$:
\begin{equation}
    \label{eq:chordHS}
    \HS(AOT(G_{ch}),t) = \prod_{j=1}^{\kappa_0-1} \left(1+jt\right)^{\Delta_j}.
\end{equation}


\subsection{The Weak Lefschetz Property}

Lefschetz properties appear in numerous areas of mathematics: algebra, combinatorics, geometry and topology. Lefschetz properties come in two flavors: Weak and Strong; in a remark at the close of their paper Proudfoot-Speyer note that after quotienting OT by a system of parameters (that is, the ``usual'' Artinian reduction, rather than by squares of the variables as in AOT) that the resulting Artinian algebra does not satisfy the Strong Lefschetz Property. However, AOT is a different algebra than the Artinian reduction of OT, and our focus is on the Weak (rather than Strong)  Lefschetz Property: 

\begin{definition}
    Let $I =\langle f_1, \ldots, f_k \rangle \subseteq S=\KK[x_1,\ldots, x_d]$ be an ideal with $A=S/I$ Artinian. 
    Then $A$ has the {\em Weak Lefschetz Property} $($WLP$)$ if there is 
    an $\ell \in S_1$ such that for all $i$, the multiplication map 
    $\mu_{\ell}: A_i \longrightarrow A_{i+1}$ has maximal rank. If not, we say that $A$ fails WLP in degree $i$. 
\end{definition}

The set of elements $\ell\in S_1$ with the property that the 
multiplication map $\mu_\ell$ has maximum rank is a (possibly empty) 
Zariski open set in $S_1$. Therefore, the existence of the Lefschetz 
element $\ell$ in the definition of the Weak Lefschetz Property 
guarantees that this set is  nonempty and is equivalent to the 
statement that for a general linear form in $S_1$ the 
corresponding multiplication map has full rank. 

The Weak Lefschetz Property depends on char($\KK$). For 
quadratic monomial ideals \cite{MNS} shows there is a natural connection to topology, and homology with $\mathbb{Z}/2$ coefficients plays a key role in understanding WLP.

Results of \cite{HMNW} show that WLP always holds for $d\le 2$ in characteristic zero.
For $d = 3$ WLP sometimes fails, but is known to hold for many classes: ideals of general forms \cite{A}, complete intersections \cite{HMNW}, ideals with semistable syzygy bundle \cite{BK}, almost complete intersections with unstable syzygy bundle \cite{BK}, ideals generated by powers of linear forms \cite{SS, MMN2}, monomial ideals of small type \cite{BMMRNZ, CN}. Nevertheless, even the $d = 3$ case is  not completely understood, and there remain intriguing open questions: for example, for $d=3$, does every Artinian Gorenstein algebra have WLP? For $d \ge 4$ far less is known; it is open if every $A$ which is a complete intersection has WLP. The survey paper \cite{MN} is a good introduction to the field; for more on Lefschetz properties, see the book \cite{HMMNWW}.

\subsection{Main Results}
Two main themes of the paper are understanding how WLP relates to graph theoretic constructions, and how such constructions relate to tensor decompositions of AOT algebras. In particular, we \begin{itemize}
        \item show there exist AOT algebras with WLP, but where no initial ideal possesses WLP; this is of interest as one method to show that $S/I$ has WLP is to show it for some initial ideal $\IN(I)$.
        \item construct canonical kernel elements of the multiplication map when the algebra has a tensor product decomposition.
    \item prove that forests, cycle graphs, and graphs with certain tensor product decompositions (and dimensions) have WLP, building on results of \cite{BMMRNZ}.
    \item use the Macaulay2 computer algebra system to categorize WLP for the AOT algebras of the graphic arrangements corresponding to all simple connected graphs on at most 7 vertices.
    \item discuss a way to build 5-vertex graphs having WLP from the 5-cycle, and the hurdles in extending this to 6-vertex graphs.
    
    \item describe open questions and directions for further research.
\end{itemize}


\section{Motivating Examples}

Our interest in studying this situation comes from the following data, obtained using the {\tt Macaulay2} computer algebra system and {\tt NautyGraphs} package. Denote a simple, connected graph as SC, and SCC if it is also chordal:

\begin{table}[h]
    \centering
    \begin{tabular}{|c|c|c|c|c|}
        \hline $\stackrel {\sharp}{\mbox{vertices}}$ & $\stackrel{\sharp \mbox{ iso. classes}}{\mbox{of SC graphs}}$ & $\stackrel{\sharp \mbox{ iso. classes}}{\mbox{of SCC graphs}}$ & $\stackrel{\sharp \mbox{ SC graphs}}{\mbox{failing WLP}}$ &$\stackrel{\sharp \mbox{ SCC graphs }}{\mbox{failing WLP}}$\\
        \hline $4$ & $6$       & $5$     & $0$& 0\\
        \hline $5$ & $21$   & $15$  & $1$ &1\\
        \hline $6$ & $112$  & $58$ & $6$ &3\\
        \hline $7$ & $853$  & $272$ & $79$ & $51$ \\
        \hline
    \end{tabular}
    \vskip .1in
    \caption{WLP results for graphs with at most seven vertices  }
    \vskip -.3in
\end{table}

\subsection{Building a stable of examples}

For simple connected graphs on four or fewer vertices, every corresponding AOT algebra has WLP. For five vertices, there is only one simple graph having an AOT failing WLP:

\begin{example}\label{bowtie}
    Let $G$ consist of two triangles, joined at a vertex. 
    \begin{figure}[h]
        \vskip -.12in
        \includegraphics[width=2in]{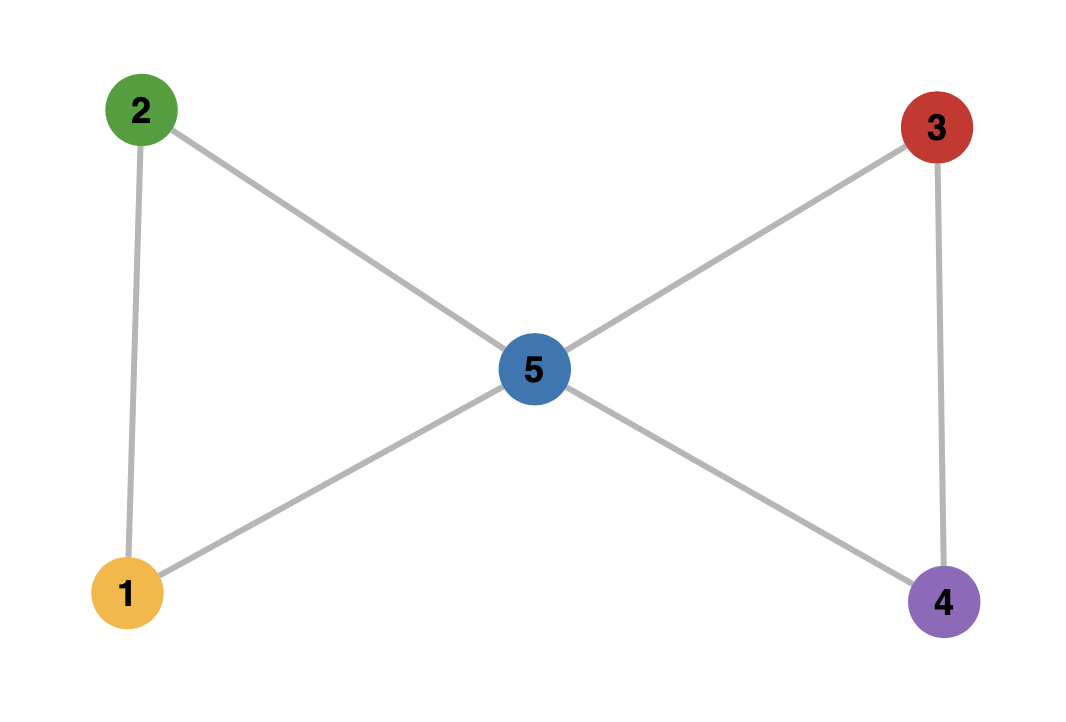} 
        \vskip -.2in
        \caption{The sole SCC graph on five vertices failing WLP}
    \end{figure}
    
    \noindent $G$ has six edges, and the relations from Definition~\ref{OTdef} are
    \[y_1y_2-y_1y_3+y_2y_3 \mbox{ and } y_4y_5-y_4y_6+y_5y_6\]
    From Equation~\ref{eq:chordHS} the Hilbert series for the AOT algebra is
    \[
        \HS(AOT,t) = 1+6t+13t^2+12t^3+4t^4.
    \]
    For this example, both injectivity and surjectivity fail: a computation shows the map $\mu_{\ell} = \cdot \sum a_iy_i: A_2 \rightarrow A_3$ has rank $11$. Notice that $A$ decomposes as a tensor product algebra $A'\otimes A^{''}$, where 
    \[
        \begin{array}{c}
            A' = \KK[y_1,y_2,y_3]/\langle y_1^2,y_2^2,y_3^2,y_1y_2-y_1y_3+y_2y_3\rangle \\
            \\
            A^{''}=\KK[y_4,y_5,y_6]/\langle y_4^2,y_5^2,y_6^2,y_4y_5-y_4y_6+y_5y_6 \rangle.
        \end{array}
    \]
    The Hilbert series for both is $1+3t+2t^2$; for $S/\IN(I)$ with a lexicographic term ordering where $y_{1} \succ \dots \succ y_{6}$, we have the decomposition:
    \[
        S/\IN(I) = \KK[y_1,y_2]/\langle y_1^2,y_1y_2,y_2^2\rangle \otimes \KK[y_3]/y_3^2 \otimes 
        \KK[y_4,y_5]/\langle y_4^2,y_4y_5,y_5^2\rangle\otimes \KK[y_6]/y_6^2.
    \]
    By Proposition~4.7 of \cite{MNS}, $S/\IN(I)$ fails WLP, but this need not imply that $A$ fails WLP.  With respect to bases for $A_2$ and $A_3$ as below
    \[
        \begin{array}{ccc}
            A_2&:& \{y_{1}y_{3},y_{1}y_{4},y_{1}y_{5},y_{1}y_{6},y_{2}y_{3},y_{2}y_{4},y_{2}y_{5},y_{2}y_{6},y_{3}y_{4},y_{3}y_{5},y_{3}y_{6},y_{4}y_{6},y_{5}y_{6}\}\\
            A_3&: &
            \{y_{1}y_{3}y_{4},y_{1}y_{3}y_{5},y_{1}y_{3}y_{6},y_{1}y_{4}y_{6},y_{1}y_{5}y_{6},y_{2}y_{3}y_{4},\\
            &&y_{2}y_{3}y_{5},y_{2}y_{3}y_{6},y_{2}y_{4}y_{6},y_{2}y_{5}y_{6},y_{3}y_{4}y_{6},y_{3}y_{5}y_{6}\}
        \end{array}
    \]
    \vskip .05in
    \noindent We find that $\mu_{\ell}$ is 
    \begin{small}
        \[
            \left[\!\begin{array}{ccccccccccccc}
               \vphantom{\left\{3,\:0\right\}}a_{4}&a_{2}\!+\!a_{3}&0&0&0&a_{1}&0&0&a_{1}&0&0&0&0\\
               \vphantom{\left\{3,\:0\right\}}a_{5}&0&a_{2}\!+\!a_{3}&0&0&0&a_{1}&0&0&a_{1}&0&0&0\\
               \vphantom{\left\{3,\:0\right\}}a_{6}&0&0&a_{2}\!+\!a_{3}&0&0&0&a_{1}&0&0&a_{1}&0&0\\
               \vphantom{\left\{3,\:0\right\}}0&a_{5}\!+\!a_{6}&a_{4}&a_{4}&0&0&0&0&0&0&0&a_{1}&0\\
               \vphantom{\left\{3,\:0\right\}}0&-a_{5}&a_{6}\!-\!a_{4}&a_{5}&0&0&0&0&0&0&0&0&a_{1}\\
               \vphantom{\left\{3,\:0\right\}}0&-a_{2}&0&0&a_{4}&a_{3}\!-\!a_{1}&0&0&a_{2}&0&0&0&0\\
               \vphantom{\left\{3,\:0\right\}}0&0&-a_{2}&0&a_{5}&0&a_{3}\!-\!a_{1}&0&0&a_{2}&0&0&0\\
               \vphantom{\left\{3,\:0\right\}}0&0&0&-a_{2}&a_{6}&0&0&a_{3}\!-\!a_{1}&0&0&a_{2}&0&0\\
               \vphantom{\left\{3,\:0\right\}}0&0&0&0&0&a_{5}\!+\!a_{6}&a_{4}&a_{4}&0&0&0&a_{2}&0\\
               \vphantom{\left\{3,\:0\right\}}0&0&0&0&0&-a_{5}&a_{6}\!-\!a_{4}&a_{5}&0&0&0&0&a_{2}\\
               \vphantom{\left\{3,\:0\right\}}0&0&0&0&0&0&0&0&a_{5}\!+\!a_{6}&a_{4}&a_{4}&a_{3}&0\\
               \vphantom{\left\{3,\:0\right\}}0&0&0&0&0&0&0&0&-a_{5}&a_{6}\!-\!a_{4}&a_{5}&0&a_{3}
               \end{array}\!\right]
        \]
    \end{small}
    
    \noindent  For the algebra $A'$ above, the kernel of the map $A'_1 \rightarrow A'_2$ is generated by 
    \[
        \begin{array}{c}
              \left(a_{1}+a_{2}-a_{3}\right)a_1y_{1}+\left(-a_{1}-a_{2}-a_{3}\right)a_2y_{2}+\left(-a_{1}+a_{2}+a_{3}\right)a_3y_{3}.
        \end{array}
    \]
    Changing subscripts $i\rightarrow i+3$, we obtain the same expression for the generator of the kernel of the map $A^{''}_1 \rightarrow A^{''}_2$. Lemma~\ref{taut} shows that tensoring these two gives an element of the kernel $A_2 \rightarrow A_3$: a vector of 13 bidegree $(2,2)$ quartics in the two sets of $a_i$ variables. The second element of $ker(\mu_{\ell})$ is identified in Propostion~\ref{taut2}.
\end{example}

A main tool for showing that WLP holds for $A=S/I$ is showing that WLP holds for $A=S/\IN(I)$ for some initial ideal of $I$, and we discuss this in \S 3. The next example shows one drawback of this strategy.

\begin{example}\label{BadNews}
    Let $G$ be obtained from the bipartite graph $K_{2,3}$ on vertex sets $\{v_1,v_2\}$ and $\{w_1,w_2,w_3\}$ by adding the edge $\overline{v_1v_2}$. 
    \begin{figure}[h]
        \vskip -.12in
        \includegraphics[width=1.7in]{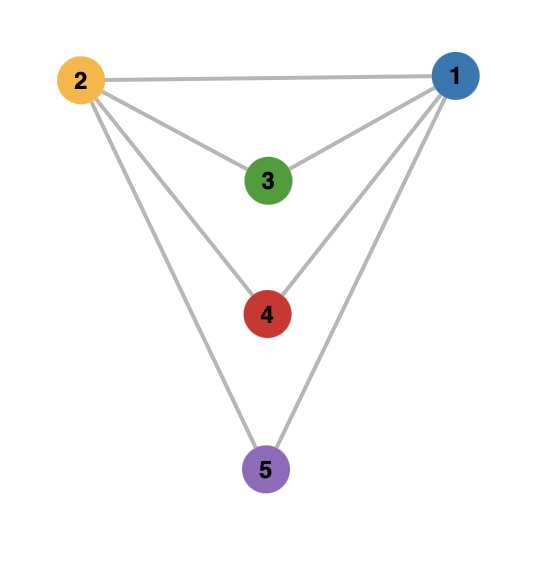} 
        \vskip -.24in
        \caption{Different term orders yield different WLP properties for $\IN(I)$}
    \end{figure}
    \vskip .05in
    \noindent For two different orders $\prec, \prec'$, suppressing the $y_i^2$ terms for clarity we have:
    \[
        \begin{array}{ccc}
            \IN_{\prec}(I) & =& \!\!\!\!\!\!\!\!\!\!\! \!\!\!\!\!\!\!\!\!\!\!\!\!\!\!\!\!\!\!\!\!\! \!\!\!\!\!\!\!\!\!\!\!\!\!\!\!\!\!\!\!\!\!\! \!\!\!\langle y_{1}y_{2},y_{3}y_{4}, y_{5}y_{6}\rangle\\
            \IN_{\prec'}(I) & =&\langle y_{1}y_{2},y_{1}y_{3}, y_{1}y_{4},y_{3}y_{4}y_{6},y_{2}y_{4}y_{5},y_{2}y_{3}y_{5}\rangle
        \end{array}
    \]
    The algebra $S/\IN_{\prec'}(I)$ has WLP, and $S/\IN_{\prec}(I)$ does not; the simplicial complex for $\IN_{\prec}(I)$ is the cone over the boundary of an octahedron. Because $G$ is chordal there exists a quadratic Gr\"obner basis, as displayed for $\prec$. While $\prec'$ does not yield a quadratic Gr\"obner basis, it does yield $\IN(I)$ having WLP. This is interesting: often a quadratic Gr\"obner basis reflects ``nice'' properties of $I$. 
\end{example}

\begin{example}\label{Bad6verts}
    For graphs on six vertices, there are six simple connected graphs which fail WLP; if we allow graphs to be disconnected there are two additional examples which we include for the interested reader.

    \begin{figure}[h]
        \includegraphics[width=3.7in]{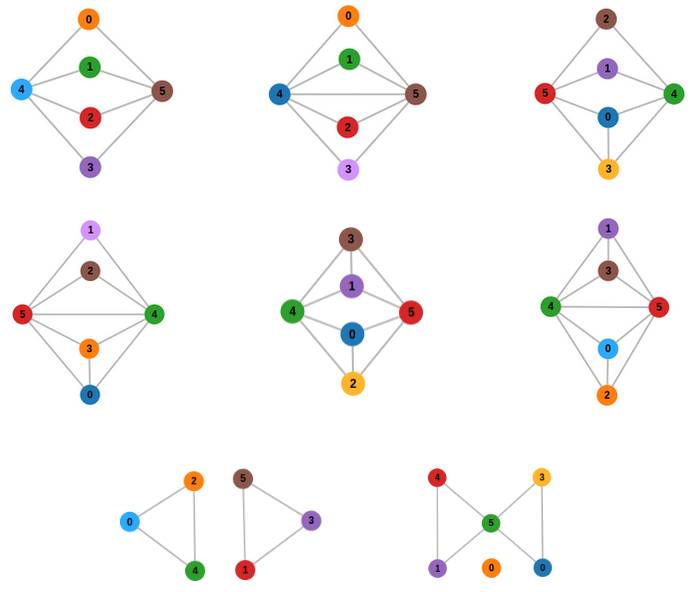} 
        \caption{The 6 isomorphism classes of simple, connected graphs on 6 vertices whose graphic arrangement AOT algebras fail WLP, followed by the 2 classes of disconnected graphs that fail.}
    \end{figure}
\end{example}

\begin{example}
    \label{Bad7verts}
    There are 79 simple connected graphs on 7 vertices that fail WLP. We show 8 of them in Figure \ref{fig:7-vertex-failures}. 
    \begin{figure}[h]
        \centering
        \includegraphics[width=5in]{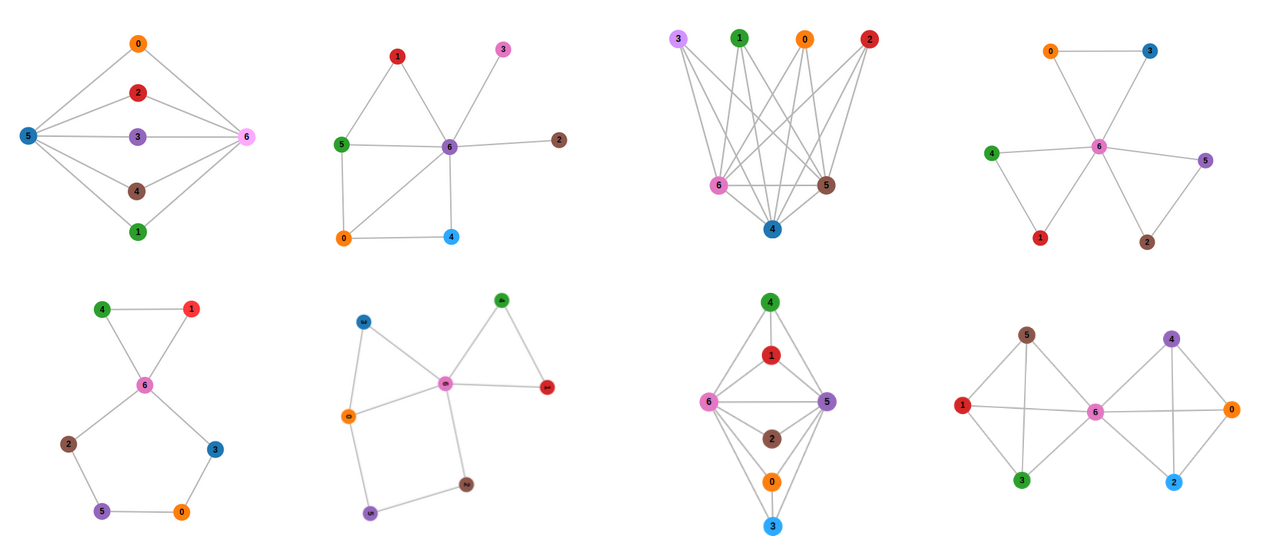}
        \caption{8 out of 79 isomorphism classes of simple, connected graphs on 7 vertices whose AOT algebras fail WLP.}
        \label{fig:7-vertex-failures}
    \end{figure}
    
    These are chosen to be a succinct, representative sample of the 79 in that most of the others have similar cycle structure to these graphs. For example, the second graph, consisting of three triangles glued together along two edges with the addition of two dangling edges, has 11 other graphs on 7 vertices with isomorphic AOT algebras due to different dangling edge placements. Likewise, the fourth graph, consisting of three triangles glued at a single vertex---or in other words the ``bowtie'' graph from Example \ref{bowtie} with another triangle glued at the center vertex---has AOT algebra isomorphic to that of the bowtie with a third triangle instead glued at any other vertex.

    As such, it turns out that while there are many isomorphism classes of graphs that fail WLP, there are a much smaller number of AOT algebra classes induced by graphic arrangements that fail.

\end{example}

\section{Graphic Artinian Orlik-Terao algebras with WLP}
A useful tool in proving that an algebra has WLP is provided by Theorem~\ref{Init} below: we could hope that for the AOT algebra of a graph that there is always a choice of term order such that the initial ideal also has WLP. Unfortunately, this is false, as we show in Proposition~\ref{BadSP}.

\begin{theorem}\label{Init}$[$Weibe, \cite{W}$]$
    If an initial ideal of $I$ has WLP, so does $I$.
\end{theorem}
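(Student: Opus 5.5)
The plan is the standard Gr\"obner degeneration plus semicontinuity argument. Fix a term order $\prec$ with $J=\IN_\prec(I)$ and suppose $S/J$ has WLP. Since $I$ and $J$ have the same Hilbert function, $S/I$ is Artinian exactly when $S/J$ is, and $\dim_\KK (S/I)_i=\dim_\KK (S/J)_i$ for all $i$. The maximal rank condition in each degree therefore compares the same pair of numbers for both algebras. It suffices to show that for a general $\ell\in S_1$ and every $i$,
\[
\operatorname{rank}\bigl(\mu_\ell:(S/I)_i\to(S/I)_{i+1}\bigr)\ \ge\ \operatorname{rank}\bigl(\mu_{\ell'}:(S/J)_i\to(S/J)_{i+1}\bigr)
\]
for a suitable $\ell'$ that is a Lefschetz element for $S/J$.

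First, choose a weight vector $w\in\Z_{\ge0}^d$ with $\IN_w(I)=J$; this is possible because $I$ has a finite Gr\"obner basis. Then form the homogenization
\[
\tilde I=\langle t^{\max_\alpha w\cdot\alpha}\,f(t^{-w}y)\mid f\in I\rangle\subseteq S[t],
\]
which is standard. The family $S[t]/\tilde I$ is flat over $\KK[t]$, its fiber at $t=1$ is isomorphic to $S/I$, and its fiber at $t=0$ is $S/J$. Each graded piece $(S[t]/\tilde I)_i$ is then a free $\KK[t]$-module of rank $h_i=\dim (S/J)_i$. Next, take a Lefschetz element $\ell$ for $S/J$, which we may take general. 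Multiplication by $\ell$ gives a $\KK[t]$-linear map between free modules of ranks $h_i$ and $h_{i+1}$, and hence a matrix $M_i(t)$ with entries in $\KK[t]$. The rank of $M_i(t)$ is lower semicontinuous in $t$, so it is maximal for all but finitely many $t$, since its rank at $t=0$ is maximal.

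The remaining step is to transport this back to $t=1$. For $t=c\neq0$, the torus action $y\mapsto c^{w}y$ identifies the fiber at $c$ with $S/I$, and it carries multiplication by $\ell$ to multiplication by $\ell_c=\ell(c^{-w}y)$, which is another linear form. Hence for generic $c\ne0$, the form $\ell_c$ has maximal rank on $S/I$ in degree $i$. Since there are only finitely many degrees and each bad set of $c$ is finite, a single $c$ works for all $i$ simultaneously. The set of Lefschetz elements of $S/I$ is Zariski open, as noted in the introduction, and it is nonempty because it contains $\ell_c$. So $S/I$ has WLP.

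The main obstacle is the flatness statement, namely that the graded pieces of $S[t]/\tilde I$ are free of the constant rank $h_i$. Without it, the matrices $M_i(t)$ are not square-sized uniformly, and specializing rank makes no sense. I would handle this by citing the standard fact that the standard monomials of $J$ form a $\KK[t]$-basis of $S[t]/\tilde I$ (Eisenbud, Theorem 15.17). The rest is routine linear algebra over $\KK[t]$.
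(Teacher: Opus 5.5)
Your proof is correct and is the standard argument: degenerate to $\IN_w(I)$, use freeness of the family (Eisenbud, Theorem 15.17), apply lower semicontinuity of the rank of $M_i(t)$, then transport back along the torus action. The paper gives no proof of its own and simply cites Wiebe, whose result rests on this same degeneration argument; the one hypothesis you use implicitly is that $\KK$ is infinite, needed to choose $c\neq 0$ outside the finitely many bad values, and this holds here since $\mathrm{char}(\KK)=0$.
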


Letting $I$ denote the OT ideal $I$ appearing in Definition \ref{OTdef}, and $J$ the ideal of the squares of the variables, we see that 
\[
    \IN(I+J) = J + M, \mbox{ where }M \mbox{ is a squarefree monomial ideal.}
\]

\begin{definition}
    Let $M$ be a matroid $M$ with an ordered ground set $[1,\ldots, n]$, and $C$ a circuit (minimal dependent set). Let $C=[a_1>a_2> \ldots .a_k]$ be a circuit. Then $[a_2, \ldots, a_k]$ is a {\em broken circuit}. For a graph $G$, the ground set consists of the edges, and chordless cycles are circuits. 
\end{definition}

In \cite{PS}, Proudfoot and Speyer show that the relations in Definition~\ref{OTdef} form a universal Gr\"obner basis for the Orlik-Terao ideal, and that for any term order $\prec$, the Stanley-Reisner ideal of the corresponding broken circuit complex is the initial ideal corresponding to the term order $\prec$.

We combine this with results of \cite{DN} and \cite{MNS} on monomial ideals containing the squares of variables. An important note is that Example~\ref{BadNews} shows for chordal graphs (which have a quadratic Gr\"obner basis), a term order which yields an initial ideal consisting of quadrics may not yield an initial ideal satisfying WLP, so that Theorem~\ref{Init} cannot be applied. Theorem 1.2 of \cite{sturmfels} shows that for a fixed ideal $I$, there are only finitely many possible initial ideals, and are encoded by the faces of the {\em state polytope}. The next proposition shows $I$ may have WLP, but no term order $\prec$ exists such that $\IN_{\prec}(I)$ has WLP. 

\begin{proposition}\label{BadSP}  
    There exist AOT algebras for which WLP holds, but does not hold for any initial ideal.
\end{proposition}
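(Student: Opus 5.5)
This is an existence statement, so the plan is to exhibit one explicit graph $G$ and verify two things about it. First, $\ao(\A_G)$ has WLP. Second, for every term order $\prec$, the monomial algebra $S/\IN_{\prec}(I+J)$ fails WLP.

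The natural place to look is among the small graphs in our computations whose AOT algebras have WLP but sit close to failing. The abstract points to an analysis of the state polytope, so we want $G$ small enough that all initial ideals can be listed. A good first candidate is a graph on $5$ or $6$ vertices with a few chordless cycles, for example $K_4$ or a graph built from the $5$-cycle. For such a graph the number of distinct initial ideals is small.

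The key reduction comes from Proudfoot--Speyer \cite{PS}. The relations of Definition~\ref{OTdef} form a universal Gr\"obner basis, and $\IN_\prec(I)$ is the Stanley--Reisner ideal of the broken circuit complex for the ordering of the edges that $\prec$ induces. Hence
\[
\IN_\prec(I+J)=J+M_\prec ,
\]
where $M_\prec$ depends only on which variable is smallest on each circuit, that is, which element is removed to form each broken circuit. By Theorem~1.2 of \cite{sturmfels} there are finitely many such ideals, indexed by the vertices of the state polytope. In our setting they are indexed by the distinct broken circuit complexes as the edge ordering varies over the $\kappa_1!$ orders.

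We then reduce further by the automorphism group of $G$, which acts on these complexes. After this step only a handful of monomial ideals $J+M$ remain up to isomorphism.

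For each remaining ideal we show that WLP fails. We use the monomial criteria from \cite{MNS} and \cite{DN} for quadratic, or more generally squarefree, monomial ideals containing the squares of the variables. WLP fails whenever the map $\mu_\ell$ from degree $i$ to degree $i+1$, with $\ell=\sum y_i$ (which suffices for monomial ideals by \cite{MMN2}-type arguments), does not have maximal rank. By \cite{MNS} this rank can be read from the $\mathbb{Z}/2$-homology of associated subcomplexes of the broken circuit complex $\Delta_\prec$, since in characteristic zero a monomial algebra has WLP iff $\sum y_i$ is a Lefschetz element. Alternatively, we decompose $S/\IN_\prec$ as a tensor product of small algebras and apply Proposition~4.7 of \cite{MNS}, as in Example~\ref{bowtie}. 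The aim is one uniform obstruction, say failure of injectivity in a degree $i$ with $h_i\le h_{i+1}$, that holds for every broken circuit complex of $G$. One candidate is a nonzero class in the kernel of the form $\prod(y_a-y_b)$ over a matching of edges that lie in a common circuit. Failing that, we exhibit the rank deficiency of the $0/1$ matrix of $\mu_{\sum y_i}$ directly.

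Separately we show that $\ao$ itself has WLP. Here we compute the rank of $\mu_\ell$ for a specific numerical $\ell$, for instance with random integer coefficients, in each degree with Macaulay2. One specialization of full rank suffices, since full rank is an open condition. Only the degrees near the peak of the Hilbert series, computed via the chromatic polynomial, need checking.

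The main obstacle is finding a graph where every initial ideal fails. Example~\ref{BadNews} shows that different orders behave very differently, so the search must be a systematic enumeration over the state polytope rather than a structural argument. I would first run this enumeration in Macaulay2 (with the {\tt StatePolytope}/{\tt gfan} interface) over all graphs in our table that have WLP. I would then write the proof for the smallest example found, listing its initial ideals up to symmetry and the failing degree for each.
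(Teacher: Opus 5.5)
The computational method you describe is essentially the paper's. You enumerate the initial ideals via the state polytope, or equivalently via broken circuit complexes; this works because $\IN_\prec(I+J)=\IN_\prec(I)+J$, as both quotients have the Poincar\'e polynomial as Hilbert series. You then test each monomial algebra and certify WLP for the AOT algebra with one explicit linear form. Your use of $\ell=\sum y_i$ for the monomial algebras even gives a deterministic failure certificate, whereas the appendix code uses a random form.

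\textbf{The gap.} The proposition is purely existential, and you never produce the example. All the content is deferred to a search you have not run, so as written nothing is proved. Moreover, the candidates you do name are provably not witnesses.
\begin{itemize}
\item For $K_4$, order the edges compatibly with a modular chain, with blocks $\{12\}$, $\{13,23\}$, $\{14,24,34\}$ and smaller blocks first. The minimal broken circuits are then exactly the pairs of edges within a single block, so the broken circuit complex is the join of the blocks. The initial ideal gives $\KK[z]/z^2\otimes\Sym(V_2)/V_2^2\otimes\Sym(V_3)/V_3^2$ with $\dim V_2=2$ and $\dim V_3=3$. This has WLP by Lemma~\ref{Monomial tensor dimensions}: the dimension vector is $(3,2,1)$, an odd number of factors.
\item The $5$-cycle and the graphs obtained from it by adding chords likewise have initial ideals with WLP, by Proposition~\ref{TreeCycle} and Theorem~\ref{5-vertex case}.
\end{itemize}

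\textbf{The paper's witness.} The missing ingredient is the specific graph: the paper takes $K_4$ with one dangling edge. That edge lies in no circuit, so the AOT algebra is $\ao(K_4)\otimes\KK[z]/z^2$, which has WLP by direct computation. The state polytope yields $54$ initial ideals, none with WLP. Lemma~\ref{Monomial tensor dimensions} explains why this is the place to look. The extra factor $\KK[z]/z^2$ turns $(3,2,1)$ into $(3,2,1,1)$, an even number of factors, so the modular-chain initial ideal now fails. The remaining initial ideals still have to be settled by the explicit computation, and so does WLP for the AOT algebra itself. Some of these initial ideals are not quadratic: for instance, the order $14<12<23<34<13<24$ makes $y_{12}y_{23}y_{34}$ a minimal generator.

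\textbf{Two smaller cautions.}
\begin{itemize}
\item Over $\mathbb{Z}/2$, the $0/1$ matrix of $\mu_{\sum y_i}$ is the simplicial coboundary of $\Delta_\prec$. So $\mathbb{Z}/2$-homology governs the characteristic-$2$ rank, which can be strictly smaller than the characteristic-$0$ rank. It therefore cannot certify failure in characteristic zero.
\item Checking only the degrees near the peak is not justified in general. Downward propagation of injectivity needs extra hypotheses such as levelness, so you should check every degree, as the appendix code does.
\end{itemize}
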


\begin{proof}
    Consider the $K_4$ graph, with a dangling edge depicted in Figure 5.
    \begin{figure}[h]
        \vskip -.10in
        \includegraphics[width=1.7in]{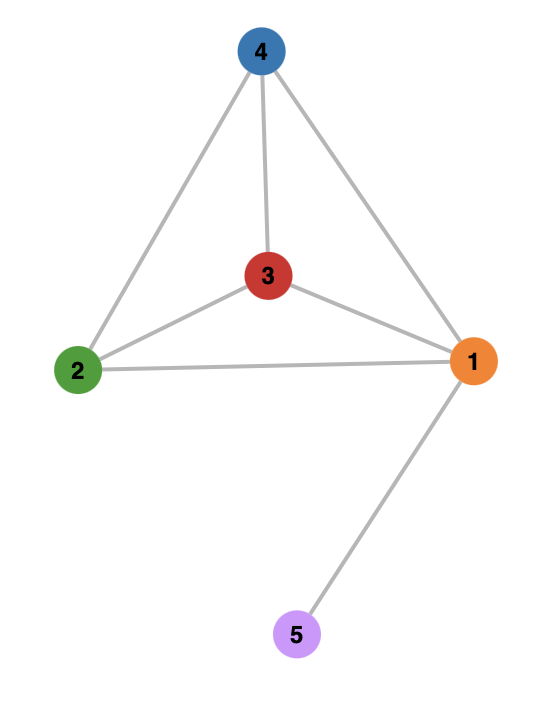} 
        \vskip -.17in
        \caption{AOT for $S/I$ has WLP, but no $S/\IN(I)$ has WLP}
    \end{figure}
    Using the {\tt Macaulay2} package {\tt Polyhedra} (see the appendix for a code snippet), we find there are 54 different possible initial ideals, and that none of them have WLP. Hence, Gr\"obner bases and degeneration to the initial ideal cannot, in general, certify WLP for the AOT ideal of a graphic arrangement.
\end{proof}

\subsection{Adding edges which introduce no new cycles}

Gr\"obner bases can be useful for some graph classes: we first recall a result of \cite{MNS}.

\begin{lemma}[Corollary 4.4, \cite{MNS}]
    \label{Tensor WLP iff l^2 full rank}
    For an Artinian $A$ which is a quotient by quadratic monomials with $A_{i+1} \neq 0$, $C = A \otimes \mathbb{K}[z]/z^{2}$ has WLP in degree $i$ if and only if $\cdot \ell^{2}$ has full rank on $A_{i-1}$, where $\ell$ is the sum of the variables of $A$.
\end{lemma}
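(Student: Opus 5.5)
Since $A$ is a quotient by quadratic monomials, it is the Stanley--Reisner ring of a flag complex modulo the squares of the variables. A basis of $A_j$ is therefore the set of squarefree monomials $y_F$ indexed by faces $F$ with $|F|=j$. Likewise $C=A\otimes\KK[z]/z^2$ has the direct sum decomposition $C_j = A_j \oplus zA_{j-1}$.

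For monomial Artinian algebras the general linear form can be replaced by the sum of the variables; this is a standard fact from \cite{MNS}, obtained by rescaling the variables, which is allowed because the ideal is monomial. So I take $L=\ell+z$ as the test element. In the block decomposition, multiplication by $L$ from $C_i=A_i\oplus zA_{i-1}$ to $C_{i+1}=A_{i+1}\oplus zA_i$ is given by
\[
(a,zb)\mapsto (\ell a,\; z(a+\ell b)),
\]
using $z^2=0$.

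Next I compare kernels and cokernels with those of $\ell^2$ on $A_{i-1}$.

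\emph{Kernel.} If $\ell a=0$ and $a=-\ell b$, then $\ell^2 b=0$. Conversely, for any $b\in\ker(\ell^2|_{A_{i-1}})$, the pair $(-\ell b,zb)$ lies in the kernel. This gives an isomorphism $\ker(\mu_L)\cong\ker(\ell^2: A_{i-1}\to A_{i+1})$.

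\emph{Cokernel.} $\mu_L$ is surjective iff every $(c,zd)$ is hit. Given $c\in A_{i+1}$ and $d\in A_i$, we need $a,b$ with $\ell a=c$ and $a+\ell b=d$. Substituting $a=d-\ell b$ reduces this to $\ell^2 b=\ell d-c$. So surjectivity of $\mu_L$ is equivalent to surjectivity of $\ell^2:A_{i-1}\to A_{i+1}$. The reverse implication also holds: given any target $c$, choose $d=0$.

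Finally I check that the dimension comparisons match. We have
\[
\dim C_i-\dim C_{i+1}=(\dim A_i+\dim A_{i-1})-(\dim A_{i+1}+\dim A_i)=\dim A_{i-1}-\dim A_{i+1}.
\]
Hence injectivity is the expected maximal-rank condition for $\mu_L$ exactly when it is for $\ell^2$, and likewise for surjectivity. Combining with the kernel and cokernel identifications, $\mu_L$ has maximal rank iff $\ell^2$ has maximal rank on $A_{i-1}$.

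I expect the main obstacle to be the reduction from a general linear form of $C$ to $\ell+z$. The statement says $C$ has WLP in degree $i$ for a general form, and the rank computation only handles the specific form $\ell+z$.
\begin{itemize}
\item The direction ``$\ell^2$ of maximal rank implies WLP'' is fine, since maximal rank is an open condition.
\item For the converse I would use the torus action $y_k\mapsto \lambda_k y_k$, which preserves a monomial ideal. This moves any form with all coefficients nonzero to the sum of the variables, and such forms are general, so maximal rank for a general form transfers to $\ell+z$.
\end{itemize}
The hypothesis $A_{i+1}\neq 0$ rules out the degenerate situation at the top degree, where the dimension comparison above becomes vacuous.
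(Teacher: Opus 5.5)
Your proof is correct and follows essentially the standard argument behind Corollary 4.4 of \cite{MNS}, which the paper cites without proof: split $C_j=A_j\oplus zA_{j-1}$, identify the kernel and cokernel of $\mu_{\ell+z}$ with those of $\ell^2\colon A_{i-1}\to A_{i+1}$, and use the torus action on a monomial algebra to replace a general linear form by the sum of the variables. As the paper remarks, the quadratic hypothesis is never used, and your argument does not need $A_{i+1}\neq 0$ either: when $A_{i+1}=0$ both $\ell^2$ and $\mu_{\ell+z}$ are trivially surjective, so your closing comment about that hypothesis describes a harmless redundancy rather than a real constraint.
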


\noindent The quadratic condition is not used in the proof in \cite{MNS}; the result holds without it.

\begin{proposition}\label{TreeCycle}
    Forests and cycles always have an AOT with WLP.
\end{proposition}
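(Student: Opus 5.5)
For a forest the plan is to compute the algebra directly. A forest has no cycles, so $\A_G$ has no dependencies and the ideal $I$ of Definition~\ref{OTdef} is zero. Hence
\[
\ao = \KK[y_1,\ldots,y_d]/\langle y_1^2,\ldots,y_d^2\rangle \cong \bigotimes_{i=1}^d \KK[y_i]/y_i^2 ,
\]
the monomial complete intersection of squares. This has WLP in characteristic zero by the classical result (Stanley, Watanabe; see also \cite{HMNW}) that monomial complete intersections have the Strong Lefschetz Property. A concrete witness is $\ell=\sum y_i$: via the $\mathfrak{sl}_2$-representation on the Boolean lattice, multiplication by $\ell$ is injective below the middle degree and surjective above it. So forests need no further work.

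For a cycle $C_n$ with edges $y_1,\ldots,y_n$ there is exactly one circuit, the whole edge set. Therefore $I=\langle f\rangle$ with $f=\partial(r)$ of degree $n-1$, and every coefficient $c_t$ is nonzero, since the only relation among the $\ell_t$ involves all of them. I would proceed in two steps.

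\begin{enumerate}
\item Fix the lex order with $y_1\succ\cdots\succ y_n$. By Proudfoot--Speyer the initial ideal is generated by the broken circuit, giving
\[
\IN(I+J)=J+\langle y_1\cdots y_{n-1}\rangle ,
\]
up to which variable is dropped. Thus $S/\IN(I+J)\cong B\otimes \KK[y_n]/y_n^2$, where $B$ is $\KK[y_1,\ldots,y_{n-1}]/\langle y_i^2\rangle$ with its top-degree monomial killed. By Theorem~\ref{Init} it suffices to prove WLP for this monomial algebra.
\item Apply Lemma~\ref{Tensor WLP iff l^2 full rank}, which the paper notes holds without the quadratic hypothesis. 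WLP of $B\otimes \KK[z]/z^2$ in degree $i$ reduces to $\cdot\ell^2\colon B_{i-1}\to B_{i+1}$ having full rank, with $\ell=y_1+\cdots+y_{n-1}$.
\end{enumerate}

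For $i+1\le n-2$, $B$ agrees with the Boolean algebra $E=\KK[y_1,\ldots,y_{n-1}]/\langle y_i^2\rangle$ in degrees $i-1$ and $i+1$. There $\ell^2$ has full rank by the Strong Lefschetz Property of $E$: by the $\mathfrak{sl}_2$ theory, $\ell^k\colon E_j\to E_{j+k}$ has full rank for every $j,k$.

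The main obstacle is the boundary degrees, where $B_{n-1}=0$ instead of $E_{n-1}=\KK$. In those degrees we need $\ell^2\colon B_{n-3}\to B_{n-2}\cdot$ only in the sense of the map $B_{n-3}\to B_{n-1}=0$. This is zero, and full rank is automatic exactly when the lemma's hypothesis $B_{i+1}\neq 0$ fails. So the relevant degrees must instead be checked by hand, with $\mu_\ell\colon A_{n-2}\to A_{n-1}$ for the tensor product itself, using the Hilbert series $(1+t)^n-t^{n-1}-t^n$. I would first try to verify surjectivity there directly. Since $\dim A_{n-1}=n-1$, it is enough to show that each monomial $y_1\cdots\widehat{y_j}\cdots y_{n-1}\,y_n$ lies in the image of $\ell\cdot A_{n-2}$. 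Then the middle-degree comparison $\binom{n}{i}$ versus $\binom{n}{i+1}$ adjusted by the removed monomials is handled by the Boolean SLP as above. Small $n$ ($n=3$) can be checked directly.
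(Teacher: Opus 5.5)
Your proposal is correct and follows the paper's route. For forests you use Stanley's theorem on monomial complete intersections; for cycles you use the Proudfoot--Speyer initial ideal $J+\langle y_1\cdots y_{n-1}\rangle$, Wiebe's theorem, the splitting off of $\KK[y_n]/y_n^2$, Lemma~\ref{Tensor WLP iff l^2 full rank}, and the Strong Lefschetz Property of the Boolean algebra. The boundary degree you flag is not a real obstacle: the paper simply notes that the $\ell^2$ map there lands in $A_{n-1}=0$. Your proposed direct surjectivity check is left unfinished but closes in one line, since $L\cdot y_1\cdots\widehat{y_j}\cdots y_{n-1}=y_1\cdots\widehat{y_j}\cdots y_{n-1}\,y_n$: every other term contains a square or the broken-circuit monomial.
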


\begin{proof}
    For forests (i.e. acyclic graphs) the result is an immediate consequence of Stanley's theorem \cite{STAN} that WLP holds for monomial complete intersections, since for a forest, the AOT ideal consists only of the squares of the variables.

    For cycles, note that for any monomial ordering, the initial ideal of the Artinian Orlik-Terao algebra for the cycle graph on $n$ vertices is given by 
    \[
        \text{in}(I) = \langle y_{1}^{2}, y_{2}^{2}, \dots, y_{n}^{2}, y_{1} y_{2} \cdots y_{n-1} \rangle
    \]
    up to relabeling of the variables. 
    
    Let $A = \mathbb{K}[y_{1}, \dots, y_{n-1}] / \langle y_{1}^{2}, \dots, y_{n-1}^{2}, y_{1} y_{2} \cdots y_{n-1} \rangle$ and let $B = \mathbb{K}[y_{n}] / \langle y_{n}^{2} \rangle$. Hence,
    \begin{align*} 
        C &:= \mathbb{K}[y_{1},\dots,y_{n}] / \langle y_{1}^{2}, \dots, y_{n}^{2}, y_{1} y_{2} \cdots y_{n-1} \rangle \\ 
          &\cong A \otimes B.
    \end{align*}
    \noindent Thus, by Lemma \ref{Tensor WLP iff l^2 full rank}, we need only show that the multiplication map $\cdot \ell = \sum_{i=1}^{n-1} y_{i}$ for $A$ has the property that $\cdot \ell^{2}$ has full rank for all $i$.

    Note that $A$ is simply $D := \mathbb{K}[y_{1}, \dots, y_{n-1}] / \langle y_{1}^{2}, \dots, y_{n-1}^{2} \rangle$ but with the only monomial in degree $n-1$ thrown out: $A_{i} \cong D_{i}$ for $i=0, \dots, n-2$, $A_{n-1}=0$, $D_{n-1} \cong \mathbb{K}$, and $D_{n}=0$. Hence, in moving from $D$ to $A$, all multiplication maps of $D$ are preserved except for the last, which becomes zero. $D$ is a monomial complete intersection, and thus by Theorem 2.4 of \cite{STAN} $D$ has the Strong Lefschetz Property, so multiplication by $\cdot \ell^{d}$ has full rank for all $d$.
\end{proof}

There are other instances where the WLP for $\IN(I)$ allows us to certify WLP for the AOT algebra. For an algebra of the form
\[
    \Sym(V_1)/V_1^2 \bigotimes \Sym(V_2)/V_2^2 \bigotimes_{i=1}^n\KK[z_i]/z_i^2 
\] 
\noindent with both $\dim(V_1) \mbox{ and }\dim(V_2) \ge 2$, Proposition 4.7 in \cite{MNS} shows that WLP holds iff $n$ is odd. As a consequence, we have

\begin{example}\label{bowtieOdd}[Bowtie with dangling edges]
    The AOT algebra of the bowtie graph of Example~\ref{bowtie} has WLP when an odd number of edges are attached, as long as the attachment introduces no new cycles.
\end{example}

\noindent This follows from a more general result in \cite{MNS} which we present now:

\begin{lemma}[Theorem 4.8 \cite{MNS}]
    \label{Monomial tensor dimensions}
    If $char(\mathbb{K}) \neq 2$ and
    \[ C = \bigoplus_{i=1}^{n} Sym(V_{i})/V_{i}^{2} \text{ with } \text{dim}(V_{i}) = d_{i}, \]\
    then $C$ has WLP if and only if one of the following holds:
    \begin{enumerate}
        \item $d_{2}, \dots, d_{n} = 1$
        \item $d_{3}, \dots, d_{n} = 1$ and $n$ is odd.
    \end{enumerate}
\end{lemma}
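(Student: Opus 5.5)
Throughout, the direct sum in the statement is read as a tensor product $C=\bigotimes_{i=1}^n \Sym(V_i)/V_i^2$, with the factors ordered so that $d_1\ge d_2\ge \cdots \ge d_n$. The plan is to decompose $C$, as a module over $\KK[\ell]$ for a general linear form $\ell$, into a direct sum of shifted Boolean algebras. Each such summand is a monomial complete intersection $\KK[w_1,\dots,w_k]/\langle w_1^2,\dots,w_k^2\rangle$ on which $\ell$ acts as $\sum w_j$. We then read off WLP from where each summand switches from injective to surjective.

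\emph{Step 1 (normal form for $\ell$).} A general $\ell\in C_1$ has the form $\ell=\sum_i x_i$ with $0\neq x_i\in V_i$. Each $GL(V_i)$ acts on its factor by graded algebra automorphisms, so the maximal rank question for $\ell$ is the same for every such $\ell$. Fix one and split $V_i=\KK x_i\oplus U_i$, with $\dim U_i=d_i-1$. In $\Sym(V_i)/V_i^2$ we have $x_iU_i=0$, and $U_i$ is killed by all of $(V_i)$. Hence
\[
\Sym(V_i)/V_i^2 \;\cong\; \KK[x_i]/x_i^2 \;\oplus\; U_i(-1)
\]
as $\KK[x_i]$-modules, with $U_i(-1)$ a trivial module.

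\emph{Step 2 (decomposing $C$).} Tensor these splittings together. For each subset $T\subseteq\{i: d_i\ge 2\}$ we get a summand
\[
\Big(\bigotimes_{i\in T}U_i\Big)\otimes B_{n-|T|}(-|T|), \qquad B_k=\KK[w_1,\dots,w_k]/\langle w_j^2\rangle .
\]
On this summand $\ell$ acts as the sum of the $n-|T|$ remaining variables, because each $x_i$ with $i\in T$ acts by zero. Multiplication by $\ell$ on $C$ is therefore the direct sum of these maps.

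By Stanley's theorem, with $\mathrm{char}\,\KK\neq 2$ sufficing for squares of variables by \cite{MNS}, $\cdot\ell$ on $B_k(-s)$ from degree $i$ to degree $i+1$ behaves as follows:
\begin{itemize}
\item it is injective iff $i\le s+\tfrac{k-1}{2}$;
\item it is surjective iff $i\ge s+\tfrac{k-1}{2}$.
\end{itemize}
For $|T|=t$ the relevant threshold is $c_t=\tfrac{n-1+t}{2}$.

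\emph{Step 3 (reading off WLP).} A direct sum of maps has maximal rank iff in every degree all summands are injective, or all are surjective. Indeed, if one summand is injective and not surjective while another is surjective and not injective, then the total rank equals (dimension of the first summand's source) plus (dimension of the second summand's target). This is strictly less than both $\dim C_i$ and $\dim C_{i+1}$.

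So WLP holds iff no integer $i$ satisfies $c_{\min}<i<c_{\max}$, where $c_{\min},c_{\max}$ are taken over the sizes $t$ that occur, and both the source and target pieces are nonzero there.
\begin{itemize}
\item \textbf{Case (1).} Only $t\in\{0,1\}$ occurs. The thresholds are $\tfrac{n-1}{2}$ and $\tfrac n2$, with no integer strictly between, so WLP holds.
\item \textbf{Case (2), two large factors.} Here $t\in\{0,1,2\}$ occurs, with thresholds $\tfrac{n-1}{2}$ and $\tfrac{n+1}{2}$. The only integer strictly between them is $n/2$, which exists iff $n$ is even. So WLP holds iff $n$ is odd.
\item \textbf{Three or more factors with $d_i\ge 2$.} Then $t=0$ and $t=3$ both occur. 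The thresholds differ by $\tfrac32$, so some integer lies strictly between them and WLP fails.
\end{itemize}

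The main obstacle is the nondegeneracy check in the failure cases. At the offending degree $i$, the $t=0$ summand must be genuinely non-injective and the large-$t$ summand genuinely non-surjective; equivalently, the relevant graded pieces must be nonzero. This requires verifying that $i$ lies within the socle range of both shifted Boolean algebras, i.e. $s\le i\le s+k$. That is a small inequality check using $n\ge 2$ (respectively $n\ge 3$).
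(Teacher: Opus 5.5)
Your argument is correct in characteristic $0$, which is the paper's standing assumption, and it follows a different route from the source. The paper gives no proof of this lemma; it simply quotes \cite{MNS}. The tools quoted alongside it suggest a more inductive argument: Lemma~\ref{Tensor WLP iff l^2 full rank}, the $\ell^2$ criterion for adjoining a $\KK[z]/z^2$ factor, together with tensor-product kernel constructions in the spirit of Lemma~\ref{taut}.

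You instead compute the $\KK[\ell]$-module structure (in effect the Jordan type of $\ell$) of $C$ directly. Splitting off the trivial summand $U_i(-1)$ from each factor writes $\mu_\ell$ as a direct sum of Boolean maps on copies of $B_{n-t}(-t)$. Both directions then reduce to Stanley's theorem for a single Boolean algebra plus a comparison of the thresholds $c_t=(n+t-1)/2$. This makes it transparent why only the number of $d_i\ge 2$ and the parity of $n$ matter. It also shows the failure direction is characteristic-free, since that direction uses only dimension counts. What the $\ell^2$ criterion buys in exchange is generality: it applies to any $A\otimes\KK[z]/z^2$, which is why the paper can use it for graphic arrangements, where no such splitting is available.

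Two points need correcting.

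First, your claim that $\mathrm{char}\,\KK\neq 2$ suffices for squares of variables is false. In $B_k$ one has $\ell\,e_j=(j+1)e_{j+1}$ for the elementary symmetric polynomials $e_j$. So in characteristic $p$ the nonzero class $e_{p-1}$ is killed by $\ell$, in a degree where injectivity is expected as soon as $k\ge 2p-1$. Concretely, take characteristic $3$, $n=5$ and all $d_i=1$. Condition (1) holds, but $C=B_5$ fails WLP from degree $2$ to degree $3$. So the lemma as literally stated is false. You should state your ``if'' direction in characteristic $0$ (or for $p$ large relative to $n$) rather than attribute an odd-characteristic version to \cite{MNS}.

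Second, your nondegeneracy condition $s\le i\le s+k$ is the wrong one. For $n=2$ with $t'=2$, and for $n=3$ with $t'=3$, the offending degree is $i=t'-1$. There the $t'$-summand has zero source, so your check as phrased fails, even though the conclusion still holds. What you actually need is:
\begin{itemize}
\item the $t=0$ summand has nonzero source, i.e. $0\le i\le n$;
\item the $t'$-summand has nonzero target, i.e. $t'-1\le i\le n-1$.
\end{itemize}
Both hold at $i=n/2$ for $t'=2$ and $n\ge 2$ even, and at $i=\lceil n/2\rceil$ for $t'=3$ and $n\ge 3$.

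Similarly, your ``surjective iff $i\ge c_t$'' in Step 2 fails when both graded pieces of a summand vanish (for instance $i\le t-2$). This does no harm, because the positive direction uses only the implications $i\le c_t\Rightarrow$ injective and $i\ge c_t\Rightarrow$ surjective, and these hold for all $0\le i\le n-1$.
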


For any number of vertices, this gives us a set of graphs whose AOT algebras have WLP. It also gives some graphs whose initial ideals fail WLP, but by Proposition \ref{BadSP}, this is not enough to say anything about the AOT algebras themselves.

\subsection{Using graphs that have WLP to build more}

The graphs shown to have $\text{in}(I)$ with WLP by Proposition \ref{TreeCycle} and Lemma \ref{Monomial tensor dimensions} will turn out to serve as ``atoms'' of sorts, from which we can build more graphs that satisfy WLP. Since $HS(AOT,t) = P({\mathcal A},t)$ and the rank of a graphic arrangement is the degree of the Poincar\'e polynomial, we have

\begin{lemma}
    \label{n vertices n HS}
    The Hilbert series of the AOT algebra of $G$ has degree $\kappa_0(G)-1$.
\end{lemma}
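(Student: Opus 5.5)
The plan is to read the degree off the Poincar\'e polynomial. By \cite{OT1}, Theorem 4.3, $\HS(\ao,t)=P(\A_G,t)$, and the degree of $P(\A,t)$ is the rank of $\A$, that is, $\codim$ of the intersection of all hyperplanes. So the lemma reduces to showing that, for connected $G$, $\A_G$ has rank $\kappa_0(G)-1$.

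To compute the rank, identify the span of the linear forms $\{x_i-x_j \mid (i,j)\in\mathsf{E}\}$ in $(\C^{\kappa_0})^*$. The intersection $\bigcap_{(i,j)\in \mathsf{E}} V(x_i-x_j)$ consists of points with $x_i=x_j$ whenever $i$ and $j$ are adjacent. Since $G$ is connected, any two vertices are joined by a path, so all coordinates are equal on this intersection. The intersection is therefore the diagonal line $\{x_1=\cdots=x_{\kappa_0}\}$, of dimension $1$ and codimension $\kappa_0-1$. Equivalently, the graphic matroid of a connected graph has rank $\kappa_0-1$, with a spanning tree as a basis.

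Finally, $P(\A,t)=\sum_{X\in L(\A)}\mu(X)(-t)^{\codim X}$, and the top coefficient comes from the maximal element of the lattice, where the M\"obius value is nonzero by Rota's sign alternation. Hence $\deg P=\operatorname{rank}\A=\kappa_0-1$.

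For chordal $G$ there is a consistency check: Equation~\eqref{eq:chordHS} has degree $\sum_j\Delta_j$, and a perfect elimination ordering shows this sum is $\kappa_0-1$. Nothing here is a real obstacle; the only point needing care is that the top coefficient is nonzero, and the M\"obius sign fact settles it. Alternatively, the broken circuit complex of \cite{PS} has as facets the spanning trees avoiding broken circuits, each of size $\kappa_0-1$.
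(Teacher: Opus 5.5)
Your proposal is correct and follows the paper's route: the paper derives the lemma directly from $\HS(\ao,t)=P(\A_G,t)$ and the fact that the degree of the Poincar\'e polynomial is the rank of the arrangement, which is $\kappa_0-1$ for a connected graph. You additionally supply the details the paper leaves implicit, namely the connectivity argument for the rank and the nonvanishing of the top M\"obius coefficient.
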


We present a theorem on graphs with 5 vertices. Example \ref{5-cycle and chord} shows the proof in action.

\begin{theorem}
    \label{5-vertex case}
    If $G$ is a graph on 5 vertices with a 5-cycle and an initial ideal $J$ that satisfies WLP, then adding a chord not already present in $G$ to the 5-cycle produces a graph $G'$ that also has WLP.
\end{theorem}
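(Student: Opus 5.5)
The plan is to exploit the freedom in the term order: by \cite{PS}, every term order yields the broken circuit complex as initial ideal, and the chord is the only new edge. Let $e$ be the chord added to the $5$-cycle, so that $G' = G\cup\{e\}$. I would fix the term order giving $J=\IN(I_G)$ and extend it by declaring $y_e$ to be the \emph{smallest} variable. Since broken circuits are obtained by deleting the largest element of a circuit, two things follow.

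First, circuits of $G$ do not contain $e$, so their broken circuits are unchanged. Second, adding $e$ creates no new circuits avoiding $e$. Every new circuit contains $e$, and $e$ survives in its broken circuit because $e$ is minimal. Hence
\[
  \IN(I_{G'}) = J + y_e K
\]
for some squarefree monomial ideal $K$ in the old variables. Writing $C' = S'/\IN(I_{G'})$ and $C=S/J$, this gives the short exact sequence
\[
  0 \longrightarrow y_e C' \longrightarrow C' \longrightarrow C \longrightarrow 0,
\]
where $y_eC' \cong \bigl(S/(J+K)\bigr)(-1)$. Combinatorially, $S/(J+K)$ is the broken circuit algebra of the contraction $G'/e$, which is a graph on $4$ vertices. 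This matches the deletion--contraction identity
\[
  \HS(C',t)=\HS(C,t)+t\,\HS(C_{G'/e},t).
\]

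Next I would take $\ell' = \ell + y_e$, where $\ell$ is a Lefschetz element for $C$. Multiplication by $\ell'$ is compatible with the sequence, so each degree gives a map of short exact sequences. The right-hand map is multiplication by $\ell$ on $C$, which has maximal rank by hypothesis. For the left-hand map, every graph on at most four vertices has WLP, as noted in \S2. For the small graphs that arise as $G'/e$, I would check that WLP is realized by a monomial (broken circuit) initial ideal, so it applies to $S/(J+K)$; by Lemma~\ref{n vertices n HS} this algebra has socle degree $3$. The snake lemma then gives injectivity of $\mu_{\ell'}$ on $C'_i$ whenever both outer maps are injective, and surjectivity whenever both are surjective. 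Once $C'$ has WLP, Theorem~\ref{Init} transfers WLP to the AOT algebra of $G'$.

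By Lemma~\ref{n vertices n HS}, the Hilbert function of $C'$ is $(1,h_1,h_2,h_3,h_4)$, so only the maps $C'_1\to C'_2$, $C'_2\to C'_3$ and $C'_3\to C'_4$ matter. The first is injective for both outer pieces: the shifted contraction term $C_{G'/e}(-1)$ maps $\KK\to\KK^{m}$ there. The last is surjective for both, since the socle degrees of $C$ and of the shifted contraction are both $4$. The main obstacle is the middle map $C'_2\to C'_3$. If the outer maps there go in opposite directions (one injective, one surjective), the snake lemma gives nothing and the connecting map must be controlled.

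My approach at that step is to exploit finiteness. Up to isomorphism there are only a few graphs on $5$ vertices containing a $5$-cycle: $C_5$ with $0$ through $5$ chords. So I would list the possible pairs $(G,e)$ and compute the Hilbert functions of $C$ and of $C_{G'/e}$ using Equation~\ref{eq:chordHS} or the chromatic polynomial. I expect that in each case either the outer maps agree in direction, or the connecting map vanishes or has rank computable from the monomial structure. In the worst case I would verify maximal rank of $\ell'$ on $C'_2$ directly for the handful of resulting monomial algebras, as illustrated in Example~\ref{5-cycle and chord}.
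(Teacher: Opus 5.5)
Your route differs from the paper's and can be completed, but as written two steps are not justified.

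\textbf{Comparison with the paper.} The paper does not use deletion--contraction. It takes the broken circuit order with the chords smallest and the five cycle edges largest, and splits off the cone variable so that $\IN(I_{G'})$ gives $A\otimes\KK[z]/(z^2)$. By Corollary~4.4 of \cite{MNS} it then reduces WLP to full rank of $\ell^2\colon A_0\to A_2$ and $\ell^2\colon A_1\to A_3$. The second map is a set-inclusion matrix that is block upper triangular, with a column of ones for each chord and the invertible $4\times 4$ block $U$ from the $5$-cycle. That argument handles $G'$ in one stroke and makes no real use of the hypothesis on $J$.

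Your sequence $0\to (S/(J+K))(-1)\to C'\to C\to 0$ together with the snake lemma does use the hypothesis. It also proves WLP for $\IN(I_{G'})$ in the extended order, so it iterates chord by chord. The price is that you need control of the contraction algebra $D:=S/(J+K)$. Two harmless corrections: your formula for $\IN(I_{G'})$ should also contain $y_e^2$, and $\ell$ must be chosen general enough to serve $S/J$ and $D$ simultaneously.

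\textbf{First gap: the left-hand map.} $D$ is the broken circuit algebra of $G'/e$ for the order \emph{inherited} from $J$, which you cannot choose. Knowing that the AOT algebra of a $4$-vertex graph, or some other initial ideal of it, has WLP says nothing about this particular monomial algebra (compare Example~\ref{BadNews} and Proposition~\ref{BadSP}). What you need is only that $D_1\to D_2$ is injective and $D_2\to D_3$ is surjective, and this holds for \emph{every} order. The simplification of $G'/e$ is a paw, a diamond or $K_4$, and its largest surviving edge is a cone point. Hence $D\cong B\otimes\KK[z]/(z^2)$ with $B_3=0\neq B_2$. Write $\ell=\ell_B+cz$ with $c\neq 0$.
\begin{itemize}
\item Injectivity on $D_1$ follows from $\ell_B^2\neq 0$ in $B_2$, which holds because the monomials are squarefree and the characteristic is zero.
\item For $\beta\in B_2$ and $\gamma\in B_1$ we have $(\ell_B+cz)(\beta+z\gamma)=z(c\beta+\ell_B\gamma)$, so $D_2\to D_3=zB_2$ is onto.
\end{itemize}

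\textbf{Second gap: the middle degree.} You leave this as an expectation with a brute-force fallback, but the feared mismatch of directions never occurs. The sequence gives $h_i(G')=h_i(G)+\dim D_{i-1}$. The $h$-vectors of the paw, diamond and $K_4$ are $(1,4,5,2)$, $(1,5,8,4)$ and $(1,6,11,6)$. So each added chord increases $h_3-h_2$ by $\dim D_2-\dim D_1\in\{1,3,5\}$, starting from $h_2=h_3=10$ for $C_5$. Hence $h_2\le h_3$ for both $G$ and $G'$ in every case. Both outer maps $C_2\to C_3$ and $D_1\to D_2$ are then injective, and the snake lemma gives injectivity, hence maximal rank, of $C'_2\to C'_3$.

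Likewise, surjectivity in the top degree needs $h_3\ge h_4$ and $\dim D_2\ge \dim D_3$ in addition to WLP. These follow from the cone structure of the broken circuit complex, not from the socle degree alone.

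With these points filled in, your argument is a complete proof.
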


\begin{proof}
    Let $C$ be the AOT algebra of the extended graph $G'$ and $A$ be the tensor factor as in Lemma \ref{Tensor WLP iff l^2 full rank}. By Lemma \ref{n vertices n HS}, $C_{5} = 0$, and hence $A_{4} = 0$. Thus, to prove $WLP$ for $C$, if we use Lemma \ref{Tensor WLP iff l^2 full rank} we only need to show that the maps $\cdot \ell^{2}: A_{i-1} \to A_{i+1}$ for $i=1,2$, where $\cdot \ell$ is the sum of the variables of $A$, have full rank.

    The first map is trivially rank 1 and full, so let us consider $\cdot \ell^{2}: A_{1} \to A_{3}$. If we choose bases for $A_{1}$ and $A_{3}$, then up to scalar multiplication the matrix will be the set-inclusion matrix between the bases.

    We can label the edges of $G$ as $y_{i}$, $i = 1, \dots, e(G)$ so that the last 5 edges are those of the given 5-cycle, and the preceding variables are the chords of the cycle. Then if we use the broken circuit basis to construct our ideal with the ordering $y_{1} < \dots < y_{e(G)}$, the matrix for $\cdot \ell^{2}$ will be of the form
    \begin{small}
        \[
            \left[\!\begin{array}{cccc}
                   \vphantom{\left\{3,\:0\right\}} 1_{a_{1} \times 1} & * & * & * \\
                   \vphantom{\left\{3,\:0\right\}} 0 & 1_{a_{2} \times 1} & * & * \\
                     & \vdots &  &  \\
                   \vphantom{\left\{3,\:0\right\}} 0 & 0 & 1_{a_{m} \times 1} & * \\
                   \vphantom{\left\{3,\:0\right\}} 0 & 0 & 0 & U
                   \end{array}
            \right],
        \]
    \end{small}
    where each $1_{a_{i} \times 1}$ is a column vector of ones and
    \begin{small}
        \[
            U = \left[\!\begin{array}{cccc}
                   \vphantom{\left\{3,\:0\right\}} 1 & 1 & 1 & 0 \\
                   \vphantom{\left\{3,\:0\right\}} 1 & 1 & 0 & 1 \\
                   \vphantom{\left\{3,\:0\right\}} 1 & 0 & 1 & 1 \\
                   \vphantom{\left\{3,\:0\right\}} 0 & 1 & 1 & 1
                   \end{array}
            \right]
        \]
    \end{small}
    comes from the 5-cycle (see the below example). This matrix has full column rank, so $C$ has WLP.
\end{proof}

We now explicitly work out an example of the above phenomenon.

\begin{example}
    \label{5-cycle and chord}
    Let $G$ be the 5-cycle, and label its edges clockwise $y_{1}, \dots, y_{5}$. Proposition \ref{TreeCycle} tells us that $G$ has WLP, but let's verify and use the matrices as building blocks for other graphs. $G$ has exactly one dependency consisting of all the edges, and hence it has exactly one broken circuit. We can choose the ordering of the edges in this broken circuit basis so that the initial ideal of the AOT algebra $C$ for the graph $G$ is
    \[ C = \frac{\mathbb{K}[y_{1}, \dots, y_{5}]}{(y_{1}^{2}, \dots, y_{5}^{2}, y_{1}y_{2}y_{3}y_{4})} \cong \frac{\mathbb{K}[y_{1}, \dots, y_{4}]}{(y_{1}^{2}, \dots, y_{4}^{2}, y_{1}y_{2}y_{3}y_{4})} \otimes \frac{\mathbb{K}[y_{5}]}{(y_{5}^{2})}. \]
    Let $A$ be the first tensor factor on the right-hand side. Then $\HS(C, t) = 1 + 5t + 10t^{2} + 10t^{3} + 4t^{4}$ and $\HS(A, t) = 1 + 4t + 6t^{2} + 4t^{3}$.
    
    $C$ has WLP in degree 0 since the multiplication map $\cdot(y_{1}+y_{2}+y_{3}+y_{4}+y_{5})$ matrix (for suitable monomial bases) is a column of ones. $C$ also has WLP in degree 3 due to the above tensor decomposition. In fact, the first and last degrees for any graph have WLP.
    
    By Lemma \ref{Tensor WLP iff l^2 full rank}, $C$ has WLP in degrees 1 and 2 if and only if $\cdot (y_{1} + y_{2} + y_{3} + y_{4})^{2}$ has full rank on $A_{0}$ and $A_{1}$, respectively. We can choose a basis for each component of $A$ as follows: 
    \[
        \begin{array}{ccc}
        A_{0} & : & \{ 1 \} \\
        A_{1} & : & \{ y_{1}, y_{2}, y_{3}, y_{4} \} \\
        A_{2} & : & \{ y_{1}y_{2}, y_{1}y_{3}, y_{1}y_{4}, y_{2}y_{3}, y_{2}y_{4}, y_{3}y_{4} \} \\
        A_{3} & : & \{ y_{1}y_{2}y_{3}, y_{1}y_{2}y_{4}, y_{1}y_{3}y_{4}, y_{2}y_{3}y_{4} \}
        \end{array}
    \]
    Thus, the map $A_{0} \to A_{2}$ is simply a column of ones, while the map $A_{1} \to A_{3}$ is (up to multiplication by 2) the set-inclusion matrix
    \begin{small}
        \[
            \left[\!\begin{array}{cccc}
                   \vphantom{\left\{3,\:0\right\}} 1 & 1 & 1 & 0 \\
                   \vphantom{\left\{3,\:0\right\}} 1 & 1 & 0 & 1 \\
                   \vphantom{\left\{3,\:0\right\}} 1 & 0 & 1 & 1 \\
                   \vphantom{\left\{3,\:0\right\}} 0 & 1 & 1 & 1
                   \end{array}
            \right],
        \]
    \end{small}
    which has full rank.

    To see the previous theorem in action, let $G'$ be the 5-cycle with a chord, and label its edges as in $G$ with the chord edge labeled $y_{0}$. Then $G'$ has exactly three dependencies. We can choose the ordering of the edges in the broken circuit basis again so that the initial ideal of the AOT algebra $C'$ for the graph $G'$ is
    \begin{align*} 
        C' &= \frac{\mathbb{K}[y_{0}, \dots, y_{5}]}{(y_{0}^{2}, \dots, y_{5}^{2}, y_{0}y_{1}, y_{0}y_{3}y_{4}, y_{1}y_{2}y_{3}y_{4})} \\
           &\cong \frac{\mathbb{K}[y_{0}, \dots, y_{4}]}{(y_{0}^{2}, \dots, y_{4}^{2}, y_{0}y_{1}, y_{0}y_{3}y_{4}, y_{1}y_{2}y_{3}y_{4})} \otimes \frac{\mathbb{K}[y_{5}]}{(y_{5}^{2})}.
    \end{align*}
    Let $A'$ be the first tensor factor on the right-hand side. Then $\HS(C', t) = 1 + 6t + 14t^{2} + 15t^{3} + 6t^{4}$ and $\HS(A', t) = 1 + 5t + 9t^{2} + 6t^{3}$.

    Similarly to the case for $C$, $C'$ has WLP in degrees 0 and 3. We only need to check the square map for $A_{0}'$ (still a column of ones) and $A_{1}'$. We again choose a basis for each component of $A'$:
    \[
        \begin{array}{ccc}
        A_{0}' & : & \{ 1 \} \\
        A_{1}' & : & \{ y_{0}, y_{1}, y_{2}, y_{3}, y_{4} \} \\
        A_{2}' & : & \{ y_{0}y_{2}, y_{0}y_{3}, y_{0}y_{4}, y_{1}y_{2}, y_{1}y_{3}, y_{1}y_{4}, y_{2}y_{3}, y_{2}y_{4}, y_{3}y_{4} \} \\
        A_{3}' & : & \{ y_{0}y_{2}y_{3}, y_{0}y_{2}y_{4}, y_{1}y_{2}y_{3}, y_{1}y_{2}y_{4}, y_{1}y_{3}y_{4}, y_{2}y_{3}y_{4} \}
    \end{array}
    \]
    Note that indeed all basis elements for $A_{i}$ are preserved in passing to $A_{i}'$. The map $A_{1}' \to A_{3}'$ has matrix
    \begin{small}
        \[
            \left[\!\begin{array}{c|cccc}
                   \vphantom{\left\{3,\:0\right\}} 1 & 0 & 1 & 1 & 0 \\
                   \vphantom{\left\{3,\:0\right\}} 1 & 0 & 1 & 0 & 1 \\ \hline
                   \vphantom{\left\{3,\:0\right\}} 0 & 1 & 1 & 1 & 0 \\
                   \vphantom{\left\{3,\:0\right\}} 0 & 1 & 1 & 0 & 1 \\
                   \vphantom{\left\{3,\:0\right\}} 0 & 1 & 0 & 1 & 1 \\
                   \vphantom{\left\{3,\:0\right\}} 0 & 0 & 1 & 1 & 1
                   \end{array}
            \right],
        \]
    \end{small}
    The bottom-right $4 \times 4$ block comes directly from $A$, and the left-most column having a nonzero value and then only zeroes shows that this matrix has full rank if and only if the matrix from $A$ has full rank.
\end{example}

\subsection{Difficulties with 6-vertex graphs}

The above argument does not extend as easily to 6-vertex graphs due to an extra term in the Hilbert series from Lemma \ref{n vertices n HS}. We illustrate this with a similar example as above.

\begin{example}
    Consider the 6-cycle, and construct the AOT algebras $C$ and $A$ as in the 5-cycle case. Then $\HS(C, t) = 1 + 6t + 15t^{2} + 20t^{3} + 15t^{4} + 5t^{5}$ and $\HS(A, t) = 1 + 5t + 10t^{2} + 10t^{3} + 5t^{4}$. We care about the ranks of the maps $A_{0} \to A_{2}$ (always 1), $A_{1} \to A_{3}$, and, new here, $A_{2} \to A_{4}$. We are primarily interested in this final map, as the first two can be shown to be full rank using the above strategy. We can choose bases for the relevant components of $A$ as follows: 
    \[
        \begin{array}{ccc}
        A_{2} & : & \{ y_{1}y_{2}, y_{1}y_{3}, y_{1}y_{4}, y_{1}y_{5}, y_{2}y_{3}, y_{2}y_{4}, y_{2}y_{5}, y_{3}y_{4}, y_{3}y_{5}, y_{4}y_{5} \} \\
        A_{4} & : & \{ y_{1}y_{2}y_{3}y_{4}, y_{1}y_{2}y_{3}y_{5}, y_{1}y_{2}y_{4}y_{5}, y_{1}y_{3}y_{4}y_{5}, y_{2}y_{3}y_{4}y_{5}. \}
    \end{array}
    \]
    The map $\left( \sum y_{i} \right)^{2}$ from $A_{2} \to A_{4}$ produces the matrix
    \begin{small}
        \[
            \left[\!\begin{array}{cccccccccc}
                   \vphantom{\left\{3,\:0\right\}} 1 & 1 & 1 & 0 & 1 & 1 & 0 & 1 & 0 & 0 \\
                   \vphantom{\left\{3,\:0\right\}} 1 & 1 & 0 & 1 & 1 & 0 & 1 & 0 & 1 & 0 \\
                   \vphantom{\left\{3,\:0\right\}} 1 & 0 & 1 & 1 & 0 & 1 & 1 & 0 & 0 & 1 \\
                   \vphantom{\left\{3,\:0\right\}} 0 & 1 & 1 & 1 & 0 & 0 & 0 & 1 & 1 & 1 \\
                   \vphantom{\left\{3,\:0\right\}} 0 & 0 & 0 & 0 & 1 & 1 & 1 & 1 & 1 & 1
                   \end{array}
            \right],
        \]
    \end{small}
    As one would expect, this matrix is full rank; see the top left $4 \times 4$ submatrix and the bottom row of ones.

    However, let us examine what happens in the case where we add an edge $y_{0}$ between a pair of opposite vertices of the 6-cycle. Assume that the edges are ordered so that the dependencies are $y_{0}y_{1}y_{2}y_{3}$, $y_{0}y_{4}y_{5}y_{6}$, and $y_{1}y_{2}y_{3}y_{4}y_{5}y_{6}$. Then if $C'$ is the AOT algebra and $A'$ is the result of removing $y_{6}$ from the tensor decomposition, then with a similar choice of bases the map from $A_{2}' \to A_{4}'$ is
    \[
        \left[\!\begin{array}{ccccc|cccccccccc}
               \vphantom{\left\{3,\:0\right\}} 1 & 0 & 1 & 1 & 0 & 0 & 1 & 1 & 0 & 0 & 0 & 0 & 1 & 1 & 0 \\
               \vphantom{\left\{3,\:0\right\}} 1 & 0 & 1 & 0 & 1 & 0 & 1 & 0 & 1 & 0 & 0 & 0 & 0 & 0 & 0 \\
               \vphantom{\left\{3,\:0\right\}} 0 & 1 & 1 & 1 & 0 & 0 & 0 & 0 & 0 & 1 & 1 & 0 & 1 & 1 & 0 \\
               \vphantom{\left\{3,\:0\right\}} 0 & 1 & 1 & 0 & 1 & 0 & 0 & 0 & 0 & 1 & 0 & 1 & 0 & 0 & 0 \\ \hline
               \vphantom{\left\{3,\:0\right\}} 0 & 0 & 0 & 0 & 0 & 1 & 1 & 1 & 0 & 1 & 1 & 0 & 1 & 0 & 0 \\
               \vphantom{\left\{3,\:0\right\}} 0 & 0 & 0 & 0 & 0 & 1 & 1 & 0 & 1 & 1 & 0 & 1 & 0 & 1 & 0 \\
               \vphantom{\left\{3,\:0\right\}} 0 & 0 & 0 & 0 & 0 & 1 & 0 & 1 & 1 & 0 & 1 & 1 & 0 & 0 & 1 \\
               \vphantom{\left\{3,\:0\right\}} 0 & 0 & 0 & 0 & 0 & 0 & 1 & 1 & 1 & 0 & 0 & 0 & 1 & 1 & 1 \\
               \vphantom{\left\{3,\:0\right\}} 0 & 0 & 0 & 0 & 0 & 0 & 0 & 0 & 0 & 1 & 1 & 1 & 1 & 1 & 1
               \end{array}
        \right]
    \]
    The bottom right submatrix has rank 5 as above, but the top left submatrix only has rank 3. To show that the entire matrix has full rank, one option is to perform row and column operations to get the form
    \[
        \left[\!\begin{array}{ccc}
               \vphantom{\left\{3,\:0\right\}} U & V & W \\
               \vphantom{\left\{3,\:0\right\}} 0 & \text{I} & 0 \\
               \end{array}
        \right],
    \]
    and then show that the matrix $[U \quad W]$ has full row rank. This is difficult to generalize, though.
\end{example}

\section{Failure of WLP and tensor product decompositions}

In \cite{MMR}, Micha\l{}ek and Mir\'o-Roig prove results on WLP for quadratic algebras which have a tensor decomposition with only two factors of type $Sym(V_i)/V_i^2$, and \cite{MNS} builds on this, showing that quadratic algebras with more than two factors of type $Sym(V_i)/V_i^2$ always fail to have WLP.

\subsection{Tensor product decompositions}

A key ingredient in the analysis is a result of \cite{BMMRNZ}, which shows that for a tensor product $A = A' \otimes A''$ of two graded Artinian algebras, and corresponding linear forms $L' \in A'$ and $L'' \in A''$ with $L = L'+L''$, if $\mu_{L'}$ fails injectivity in degree $i-1$ and $\mu_{L''}$ fails injectivity in degree $j-1$, then $\mu_{L}$ fails injectivity in degree $i+j-1$, and similarly for surjectivity. In fact, we can be a bit more precise: in the setting of a tensor product algebra, there are tautological elements in the kernel of the multiplication map. 

In Example~\ref{bowtie} we displayed a tautological element of $\ker(\mu_L)$ with $L=\sum_{i=1}^6 a_iy_i$, arising from the fact that the algebras
\[
    \begin{array}{c}
        \KK[y_1,y_2,y_3]/\langle y_1^2,y_2^2,y_3^2,y_1y_2-y_1y_3+y_2y_3\rangle \\
        \mbox{ and }\\
        \KK[y_4,y_5,y_6]/\langle y_4^2,y_5^2,y_6^2,y_4y_5-y_4y_6+y_5y_6 \rangle.
    \end{array}
\]
both have Hilbert series $1+3t+2t^2$.

\begin{lemma}\label{taut}
    In the setting above, 
    \[
        \ker(\mu_{L'})_{i} \otimes \ker(\mu_{L''})_{j} \subseteq \ker(\mu_{L})_{i+j}.
    \]
    Denote the dimensions of the respective kernels as $k'_i$ and $k^{''}_j$. If $k'_i \cdot k^{''}_j > 0$, then $\mu_{L}$ fails both injectivity and surjectivity in degree $i+j$ if either of the following is true: 
    \begin{itemize}
        \item $\dim(A_{i+j}) \le \dim(A_{i+j+1})$.
        \item $\dim(A_{i+j}) - k'_i \cdot k^{''}_j < \dim(A_{i+j+1})$.  
    \end{itemize}
\end{lemma}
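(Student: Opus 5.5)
The plan is to verify the inclusion directly from the definition of the product in $A = A'\otimes A''$, and then to read off both failure statements from a dimension count. Throughout I regard $\ker(\mu_{L'})_i$ as a subspace of $A'_i$ and $\ker(\mu_{L''})_j$ as a subspace of $A''_j$. The grading of the tensor product is $A_{k} = \bigoplus_{p+q=k} A'_p\otimes A''_q$, and $L = L'\otimes 1 + 1\otimes L''$ acts on a pure tensor by
\[
    L\cdot (a\otimes b) = (L'a)\otimes b + a\otimes (L''b).
\]
For $a\in \ker(\mu_{L'})_i$ and $b\in\ker(\mu_{L''})_j$ both terms vanish, so $a\otimes b\in\ker(\mu_L)_{i+j}$. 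Extending by linearity gives the inclusion $\ker(\mu_{L'})_{i} \otimes \ker(\mu_{L''})_{j} \subseteq \ker(\mu_{L})_{i+j}$.

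Next I will check that this subspace genuinely has dimension $k'_i\cdot k''_j$ inside $A_{i+j}$. Over a field, the tensor product of two injections of vector spaces is again injective. Hence $\ker(\mu_{L'})_i\otimes \ker(\mu_{L''})_j \hookrightarrow A'_i\otimes A''_j$, and $A'_i\otimes A''_j$ is a direct summand of $A_{i+j}$. It follows that $\dim \ker(\mu_L)_{i+j} \ge k'_i k''_j$. In particular, if $k'_i k''_j>0$, then $\mu_L: A_{i+j}\to A_{i+j+1}$ is not injective, and
\[
    \operatorname{rank}(\mu_L)_{i+j} \le \dim A_{i+j} - k'_i k''_j < \dim A_{i+j}.
\]

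Both bullet points then follow by comparing this rank bound with $\dim A_{i+j+1}$.
\begin{itemize}
    \item Suppose $\dim A_{i+j}\le\dim A_{i+j+1}$. Then maximal rank would mean injectivity, which has just been ruled out. Moreover, the rank is strictly less than $\dim A_{i+j}\le \dim A_{i+j+1}$, so $\mu_L$ is not surjective either.
    \item Suppose $\dim A_{i+j} - k'_ik''_j<\dim A_{i+j+1}$. Then the rank bound gives $\operatorname{rank} < \dim A_{i+j+1}$, so surjectivity fails, while injectivity fails as before.
\end{itemize}
In either case $\mu_L$ is neither injective nor surjective in degree $i+j$, so it fails to have maximal rank there.

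There is no real obstacle here. The only point requiring care is the bookkeeping of degrees, namely that the kernels are taken at the sources $A'_i$ and $A''_j$ so that their tensor lands in $A_{i+j}$. The other point is the linear-algebra fact that tensoring subspaces preserves independence, which guarantees the lower bound $k'_ik''_j$ on the kernel dimension.
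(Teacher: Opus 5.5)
Your proposal is correct and follows the same route as the paper. The inclusion comes from $L\cdot(\alpha\otimes\beta) = (L'\alpha)\otimes\beta + \alpha\otimes(L''\beta) = 0$, and you additionally spell out the dimension count for $\ker(\mu_{L'})_i\otimes\ker(\mu_{L''})_j$ inside $A'_i\otimes A''_j\subseteq A_{i+j}$ and the rank comparison, both of which the paper dismisses as following immediately.
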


\begin{proof}
    For the first statement, let $\alpha \in \ker(\mu_{L'})_{i}$ and $\beta \in \ker(\mu_{L''})_{j}$. Then 
    \[
        \begin{array}{ccc} 
            \alpha \otimes \beta \otimes L &=& \alpha \otimes \beta \otimes L' +  \alpha \otimes \beta \otimes L^{''}\\
             & = & (\alpha \otimes L')\otimes \beta + \alpha \otimes(\beta \otimes L^{''})\\
             & = & 0+0
        \end{array}
    \]
    \noindent The second part of the lemma follows immediately.
\end{proof}

\begin{example}\label{manyTriangles}[Bouquet of triangles]
    Consider $d\ge 3$ triangles joined at a common vertex. Using Equation (1), the Hilbert series of the AOT algebra is
    $(1+t)^d(1+2t)^d$. If $(*) \dim(A_d) \le \dim(A_{d+1})$, then induction using a repeated application of Lemma~\ref{taut} shows that the map $A_d \longrightarrow A_{d+1}$ is not injective and therefore WLP fails; computations verify that $(*)$ holds for $d \le 30$.
\end{example}

\begin{example}\label{bowtieKer}
    For the bowtie graph, the kernel of $\mu_{\ell}: A_2\rightarrow A_3$ has dimension two, and in Example~\ref{bowtie} we identified one element of the kernel. For the other generator, we exploit the implicit bigrading in the tensor decomposition $A = A' \otimes A''$. Let
    \[
        \sum\limits_{i=1}^6 a_iy_i = \alpha + \beta,
    \]
    where $\alpha$ consists of the first three terms, and $\beta$ the second three terms, and let
    \[
        \begin{array}{ccc}
            \gamma & = & y_1y_3(a_1a_3+a_1a_2)+y_2y_3(a_2a_3-a_1a_2)\\
            \delta & = & y_4y_6(a_4a_6+a_4a_5)+y_5y_6(a_5a_6-a_4a_5)\\
            \epsilon& = & \alpha \cdot \beta = \sum\limits_{\substack{i=1..3\\j=4..6}}a_ia_jy_iy_j
        \end{array}
    \]
    A computation shows that 
    \[
        (2\gamma - \epsilon + 2\delta)\cdot (\alpha+\beta) = 0
    \]
    To see this directly, first note that since $\alpha^3 = 0 = \beta^3$, we have 
    
    \begin{equation}\label{cube0}
        (\alpha^2-\alpha \beta +\beta^2)(\alpha+\beta) = 0.
    \end{equation}
    
    Now recall that on $A'_2$ and $A''_2$ we have relations
    \[
        y_1y_2-y_1y_3+y_2y_3 = 0 = y_4y_5-y_4y_6+y_5y_6.
    \]
    Using these, we obtain
    \[
        \alpha^2=2(y_1y_3(a_1a_3+a_1a_2)+y_2y_3(a_2a_3-a_1a_2))=2 \gamma  \mbox{ in }A'_2 \subseteq A_2.
    \]
    \noindent and similarly for $\beta^2$. In particular, 
    \[
        \alpha^2 - \sum\limits_{\substack{i=1..3\\j=4..6}}a_ia_jy_iy_j + \beta^2
    \]
    \noindent is an element of the kernel of $\mu_{\ell}$. It is independent of the element we found using Lemma~\ref{taut} because the element from Lemma~\ref{taut} is of pure bidegree $(1,1)$, whereas the element above is of necessity of mixed bidegree.
\end{example}

\begin{proposition}\label{taut2}
    Let $A = A^{1} \otimes A^{2}$ with $A_{n}^{1} \neq 0 \neq A_{n}^{2}$ and $A_{n+1}^{1} = 0 = A_{n+1}^{2}$. Then $\ker(A_{n} \stackrel{\mu_{\ell}}{\rightarrow} A_{n+1})$ is nonzero, and if the kernels of the multiplication maps on $A_{i}^{1}$ and $A_{j}^{2}$ are nonzero for any pair of $i$ and $j$ such that $i + j = n$, then $\dim(\ker(A_{n} \stackrel{\mu_{\ell}}{\rightarrow} A_{n+1})) \geq 2$.
\end{proposition}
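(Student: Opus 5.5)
The plan generalizes the computation of Example~\ref{bowtieKer}. Write $\ell = \alpha + \beta$, with $\alpha \in A^1_1$ and $\beta \in A^2_1$ the two pieces of a general linear form. Since $A^1_{n+1}=0=A^2_{n+1}$, we have $\alpha^{n+1}=0=\beta^{n+1}$. This gives the telescoping identity
\[
\Big(\sum_{k=0}^{n} (-1)^k \alpha^{n-k}\beta^{k}\Big)(\alpha+\beta) = \alpha^{n+1} + (-1)^n \beta^{n+1} = 0,
\]
which is the analogue of Equation~\eqref{cube0}. Call the first factor $\omega = \sum_{k=0}^{n}(-1)^k\alpha^{n-k}\beta^k \in A_n$. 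We then use the bigrading $A_n = \bigoplus_{i+j=n} A^1_i\otimes A^2_j$. The component of $\omega$ in bidegree $(n,0)$ is $\alpha^n\otimes 1$. So $\omega\neq 0$ as soon as $\alpha^n \neq 0$ in $A^1_n$, and this gives the first claim.

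The main obstacle is exactly the nonvanishing of $\alpha^n$ for a general $\alpha$, given only $A^1_n\neq 0$. In characteristic zero I would argue as follows. $A^1_n$ is spanned by products of $n$ linear forms, and by polarization each such product is a linear combination of $n$-th powers of linear forms. So if $\alpha^n$ vanished for all $\alpha$, then $A^1_n$ would be zero. Hence $\alpha^n\neq 0$ on a nonempty Zariski open set, and the same holds for $\beta^n$. A general $\ell$ lies in the intersection of these open sets. If $\alpha^n=0$ were allowed, I would fall back on symmetry: the $(0,n)$ component is $(-1)^n\otimes\beta^n$, so it suffices that either power is nonzero.

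For the second claim, suppose $0\neq \kappa'\in\ker(\mu_{\alpha})\cap A^1_i$ and $0\neq\kappa''\in\ker(\mu_\beta)\cap A^2_j$ with $i+j=n$. Lemma~\ref{taut} then gives $\kappa'\otimes\kappa''\in\ker(\mu_\ell)_n$, and this element is nonzero and of pure bidegree $(i,j)$. It remains to check that $\omega$ and $\kappa'\otimes\kappa''$ are linearly independent.

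When $0<i<n$, the element $\omega$ has the nonzero components $\alpha^n\otimes 1$ and $1\otimes\beta^n$ in the two extreme bidegrees $(n,0)$ and $(0,n)$, neither of which is $(i,j)$. So $\omega$ cannot be a multiple of a pure bidegree $(i,j)$ element, and the kernel has dimension at least $2$.

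The edge case $i=n$, $j=0$ needs $\ker(\mu_\beta)\cap A^2_0\neq 0$, which would force $\beta=0$ in $A^2_1$. For a general $\beta$ this is impossible unless $A^2_1=0$, which contradicts $A^2_n\neq 0$ for $n\ge1$. So the pure bidegree cases that actually occur are interior ones, and there the bidegree comparison is decisive. The case $j=n$, $i=0$ is symmetric.
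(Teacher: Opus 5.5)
Your proof is correct and is essentially the paper's argument: the alternating-sum identity $\omega(\alpha+\beta)=\alpha^{n+1}+(-1)^n\beta^{n+1}=0$, together with the bigrading of $A_n$, shows that $\omega\neq 0$ and that it is independent of the pure-bidegree kernel elements supplied by Lemma~\ref{taut}. Your indexing, which uses $\alpha^{n+1}=0=\beta^{n+1}$ so that $\omega\in A_n$ (as in the bowtie case $n=2$), is the correct one---the paper's displayed identity is shifted down by one---and your polarization argument for $\alpha^n\neq 0$ and your treatment of the edge bidegrees $(n,0)$, $(0,n)$ fill in points the paper leaves implicit.
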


\begin{proof}
    Keeping the notation from Example~\ref{bowtieKer}, we have that a form of Equation~\ref{cube0} holds no matter what $n$ is as follows. If $n$ is odd,
    \[ (\alpha+\beta) \left( \alpha^{n-1} - \alpha^{n-2}\beta + \alpha^{n-3}\beta^{2} - \dots - \alpha\beta^{n-2} + \beta^{n-1} \right) = \alpha^{n} + \beta^{n} = 0. \]
    If $n$ is even,
    \[ (\alpha+\beta) \left( \alpha^{n-1} - \alpha^{n-2}\beta + \alpha^{n-3}\beta^{2} - \dots + \alpha\beta^{n-2} - \beta^{n-1} \right) = \alpha^{n} - \beta^{n} = 0. \]
    The degrees in either case are
    \[
        \begin{array}{cccc}
        \deg(\alpha^{n-1}) = (n-1,0), & \deg(\alpha^{n-2} \beta) = (n-2,1), & \dots & \deg(\beta^{n-1}) = (0,n-1)
        \end{array}
    \]
    There can be no cancellation between these terms due to the different bidegrees, and the first result follows; for the second apply Lemma~\ref{taut}, and note that due to bidegree considerations the relations from Lemma~\ref{taut} do not interact with the relation from Equation~\ref{cube0}.
\end{proof}

\section{Future Directions}

\begin{itemize}
\item In the case of the Orlik-Solomon algebra, the resonance varieties capture the locus of those linear forms $L_i$ where the multiplication map is not full rank. It would be interesting to connect this to work on the non-Lefschetz locus in \cite{NonLef}.
    \item Proposition~\ref{BadSP} shows that in general one cannot hope for a Gr\"obner basis characterization that covers WLP in all situations. So an interesting question is to identify classes of graphs for which there is a term order such that the corresponding Gr\"obner basis certifies WLP for the AOT.
    \item Example~\ref{bowtie} and Example~\ref{Bad6verts} suggest that there may be excluded minor conditions related to WLP. Find such conditions. 
    \item Further examine how WLP interacts with the graph theoretic deletion and contraction operations as in Theorem \ref{5-vertex case}, especially in the 6-vertex case and more.
    \item We computationally verified WLP up to 7 vertices, where the densest graphs' AOT algebras begin to have Hilbert series with terms in the thousands. It would be nice to build more examples, but the computational methods will need some more efficiencies before this is feasible.
\end{itemize}

\vskip .1in

\noindent{\bf Acknowledgements} Computations were performed using Macaulay2,
by Grayson and Stillman, and available at: {\tt http://www.math.uiuc.edu/Macaulay2/}; please see the appendix for relevant code snippets. 

We thank the organizers of the conference ``Artinian Algebras and Hilbert Schemes'' held at Universit\'e Cote d'Azur in June 2025 in honor of Tony Iarrobino for creating a wonderful collaborative environment, as well as two anonymous referees for helpful comments.

\bibliographystyle{amsplain}

\appendix

\section{Macaulay2 code}

\subsection{State polytope computations}

To compute the state polytope of the $K_{4}$ graph with a dangling edge from Proposition \ref{BadSP}, we use the following code.

\begin{verbatim}
-- Appendix A: Macaulay2 code verifying WLP failure
-- for all initial ideals of the AOT algebra of K_4
-- with a dangling edge.


loadPackage("HyperplaneArrangements", Reload=>true);
loadPackage("Polyhedra", Reload=>true);

-- Define the graph, K_4 + dangling edge, as a list.

G = {{1, 2}, {1, 3}, {1, 4}, {2, 3}, {2, 4}, {3, 4}, {4, 5}};

-- Compute the AOT ideal using HyperplaneArrangements.

-- Orlik-Terao ideal
OTideal = orlikTerao graphic G;
-- Squares of the variables
squares = ideal((gens ring OTideal) / (x -> x^2));
-- Artinian Orlik-Terao ideal
AOTideal = OTideal + squares

-- Compute the state polytope and all initial ideals
-- using Polyhedra.

-- For the first command, result is seq. of length 2: 
--   first elt is list of gens of all initial ideals,
--   second elt is state polytope.
initials = statePolytope AOTideal;
allInitialIdealGens = initials#0;
-- Number of distinct initial ideals; 54 here
#allInitialIdealGens
-- Turn gen sets into ideals
allInitialIdeals = allInitialIdealGens / (M -> ideal M);
-- Get all ambient rings
allAmbientRings = allInitialIdeals / (I -> ring I);

-- Define a function that determines whether an Artinian algebra
-- has WLP. Brute force.
-- Warning: uses a random linear form. 
-- If result is true, the algebra has WLP. 
-- If false, there is a very small probability that the algebra
-- truly has WLP.

WLP = (A) -> (
    -- Dimension check
    if dim(A)>0 then return "Error: algebra is not Artinian!";
    
    R := ambient A;
    -- Reproducibility; change this for different linear form
    setRandomSeed(1);
    -- Get just the linear form
    ell := (entries(random(R^{1}, R^1)))#0#0;
    i := 0;
    while basis(i,A) != 0 do (
        -- Constructs the matrix between bases
        M := ell * basis(i,A) // basis(i+1,A);
        -- Maximum possible rank
        greatestRank := min(numrows M, numcols M);
        if rank M < greatestRank then return false;
        i = i+1; -- Move on if full rank
    );
    return true;
)

-- Determine whether each initial ideal has WLP.

-- Apply the above function.
wlpValues = apply(
    allInitialIdeals, allAmbientRings, (I, S) -> (WLP(S/I))
);

-- Display the number of initial ideals that have WLP.
number(wlpValues, b -> b) -- 0 here
\end{verbatim}

\end{document}